\newtheorem{theorem}{Theorem}
\newtheorem{lemma}{Lemma}
\newtheorem{corollary}{Corollary}
\theoremstyle{definition}
\newtheorem{example}{Example}
\theoremstyle{plain}
\newtheorem{proposition}{Proposition}
\def\T{\mathfrak T}
\def\diag{\mathrm{diag}}
\title{Optimal approximate designs for estimating treatment contrasts resistant to nuisance effects}
\author{Samuel Rosa and Radoslav Harman}
\affil{Faculty of Mathematics, Physics and Informatics, Comenius University, Bratislava, Slovakia}
\date{\today} 
\begin{document}
	
	\maketitle

\section*{Abstract}

Suppose that we intend to perform an experiment consisting of a set of independent trials. The mean value of the response of each trial is assumed to be equal to the sum of the effect of the treatment selected for the trial, and some nuisance effects, e.g., the effect of a time trend, or blocking.
In this model, we examine optimal approximate designs for the estimation of a system of treatment contrasts, with respect to a wide range of optimality criteria.

We show that it is necessary for any optimal design to attain the optimal treatment proportions, which may be obtained from the marginal model that excludes the nuisance effects.
Moreover, we prove that for a design to be optimal, it is sufficient that it attains the optimal treatment proportions and satisfies conditions of resistance to nuisance effects. 
For selected natural choices of treatment contrasts and optimality criteria, we calculate the optimal treatment proportions and give an explicit form of optimal designs. In particular, we obtain optimal treatment proportions for comparison of a set of new treatments with a set of controls.
The results allow us to construct a method of calculating optimal approximate designs with a small support by means of linear programming. As a consequence, we can construct efficient exact designs by a simple heuristic.

\section{Introduction}\label{sIntro}

The results of an experiment may be affected by conditions with effects that we aim to estimate and by other conditions with nuisance effects. For example, any
experiment that consists of multiple trials performed in a time sequence may be subject to a nuisance time trend caused by the ageing of the material used for the experiment, wearing down of the experimental devices, changes in the temperature, etc. Many agricultural experiments are subject to a two-dimensional nuisance trend, resulting from the arrangement of the trials in a two-dimensional field, see, e.g., \cite{JacrouxMajumdar} and \cite{BaileyWilliams}. The objective of the experimental design in such cases is to eliminate the nuisance effects, or to provide as much information as possible on the effects of interest.
\bigskip

The aim of this paper is to provide the $\Phi$-optimal approximate designs for estimating a system of contrasts of treatment effects under the presence of nuisance effects, where $\Phi$ is a given optimality criterion. Particularly, we aim to provide designs optimal under the presence of nuisance time trends. 

There is a large amount of literature on exact designs in such models, especially on block designs (e.g., \cite{MajumdarNotz}, \cite{Majumdar}, \cite{Jacroux02}), but also on trend resistant designs (e.g., \cite{Cox}, \cite{Cheng},  \cite{AtkinsonDonev}) or block designs in the presence of a trend (e.g., \cite{BradleyYeh}, \cite{JacrouxMajumdar}). However, these results are usually tailored for a particular model and a system of contrasts, often with limiting assumptions (e.g., the relationship between the number of blocks and treatments, or trends modelled by polynomials of given degrees).
In approximate theory, \cite{Pukelsheim83} studied optimal block designs for estimating centered contrasts, \cite{GiovagnoliWynn} obtained optimal block designs for comparing treatments with a control with respect to Kiefer's optimality criteria and \cite{Schwabe} studied product designs in additive models. 

The results on approximate designs are usually simpler and more general than the results on exact designs, therefore they provide a valuable insight into the qualitative aspects of the design problem. Moreover, the optimal approximate designs facilitate the computation of informative lower bounds on the efficiency of exact designs. However, it is not always clear how to convert approximate designs to exact designs that can be used for an actual, finite-size experiment.
In this paper, we provide conditions of approximate optimality of designs for the estimation of any system of contrasts in a general additive model with any system of nuisance effects. Moreover, we demonstrate that the conditions can be employed to construct efficient exact designs by means of linear programming.
\bigskip

We show that a $\Phi$-optimal approximate design may be obtained in two steps: (i) Calculate $\Phi$-optimal proportions of treatment replications (treatment weights). These optimal proportions depend on the choice of contrasts of interest and on the optimality criterion $\Phi$; however, they do not depend on the nuisance effects. (ii) Subject to keeping the optimal proportions of treatment replications, distribute the treatments to nuisance conditions such that the resulting design is \emph{resistant to nuisance effects}. The designs resistant to nuisance effects are an extension of the designs orthogonal to the time trend (balanced for trend, or trend-free, cf. \cite{Cox}, \cite{JacrouxRay}) to a more general class of models and treatment contrasts.

The approach of first finding a design in a simpler model and then assuring that the information is retained in a finer model was used, e.g., in \cite{Schwabe} and \cite{Kunert}. \cite{Schwabe} studied optimal product designs, unlike the present paper, where optimal designs with non-product structure are provided too; \cite{Kunert} studied  exact designs in the case of universal optimality. Universal optimality, formulated by \cite{Kiefer}, means optimality for estimating a maximal system of orthonormal contrasts (which is a special case of the general system of contrasts that we consider), with respect to a wide range of criteria.

For selected systems of treatment contrasts and a wide class of optimality criteria $\Phi$, we calculate $\Phi$-optimal treatment weights and thus obtain a class of $\Phi$-optimal designs. For instance, for the estimation of contrasts for comparing a set of new treatments with a set of controls, we provide $MV$-optimal designs and optimal designs with respect to Kiefer's $\Phi_p$-optimality criteria, $p \in [-\infty,0]$, including $A$- and $E$-optimality ($p=-1$ and $p=-\infty$, respectively). These results generalize the results given by \cite{GiovagnoliWynn}, who obtained $\Phi_p$-optimal block designs for comparing treatments with one control.
For any \emph{completely symmetric} system of contrasts, we show that the uniform design is $\Phi$-optimal for all orthogonally invariant information functions, which generalizes, for instance, a result given by \cite{Pukelsheim83}.

The obtained results may be used to analytically construct optimal approximate designs. A special case of $\Phi$-optimal designs resistant to nuisance effects are the product designs with $\Phi$-optimal treatment proportions (cf., e.g., \cite{Schwabe}), but the approximate product designs have a large support, which makes the transition to exact designs difficult. 
However, the set of optimal approximate designs is typically large and both the conditions of optimal treatment weights and the conditions of resistance to nuisance effects are linear. Therefore, we can employ the simplex method of linear programming to obtain optimal approximate designs with a small support. This allows us to construct efficient exact designs using a simple heuristic, especially in the presence of nuisance trends.

In the rest of Section \ref{sIntro}, our notation and the statistical model is established. The main theoretical results are proved in Section \ref{sBalanced}. In the same section, we provide optimal treatment proportions for estimating particular sets of contrasts. Examples of experiments under the presence of nuisance effects are provided in Section \ref{sExamples}. The theoretical results are applied in Section \ref{sConstructing} to obtain optimal approximate designs with small support and consequently efficient exact designs.

\subsection{Notation}

The symbols $1_n$ and $0_n$ denote the column vectors of length $n$ of ones and zeroes, respectively. The symbol $J_n$ denotes the $n \times n$ matrix $J_n = 1_n 1^T_n$ of ones and $e_u$ is the $u$-th standard unit vector (the $u$-th column of the identity matrix $I_n$, where $n$ is the dimension of $e_u$).
By the symbol $0_{m\times n}$, or by $0$ if the dimensions are clear from the context, we denote the $m\times n$ matrix of zeroes.
We denote the null space and the column space of a matrix $A$ by $\mathcal{N}(A)$ and $\mathcal{C}(A)$, respectively.
By the symbol $\mathfrak{S}^s_+$ we denote the set of $s \times s$ non-negative definite matrices and 
by $\preceq$ we denote the Loewner ordering of matrices in $\mathfrak{S}^s_+$, i.e., $A \preceq B$ if $B-A$ is non-negative definite.
Let $x=\big(x_1, \ldots, x_n\big)^T$ be a vector with non-zero components, then by $x^{-1}$ we denote the vector $x^{-1} := \big(x_1^{-1}, \ldots, x_n^{-1}\big)^T$.
By $\diag(v_1,\ldots,v_k)$, where $v_1,\ldots,v_k$ are column or row vectors, we denote the diagonal matrix with diagonal elements corresponding to the elements of $v_1,\ldots,v_k$.

\subsection{Statistical Model}

Consider an experiment consisting of $N$ trials, where in each trial we choose one of $v$ treatments ($v \geq 2$). The response of the $i$-th trial is determined by the  effect $\tau_{u(i)}$ of the chosen treatment $u(i)$ and by the effects of nuisance experimental conditions $t(i)$ from a finite set $\T$, $|\T| =: n < \infty$.

We assume that the model is additive in the treatment and nuisance effects and that it can be expressed as
\begin{equation}\label{eModel1}
	Y_i=\tau_{u(i)}+h^T(t(i)) \theta + \varepsilon_i, \quad i=1,\dots,N,
\end{equation}
where $Y_1, \ldots, Y_N$ are the observations, $\theta$ is a $d \times 1$ vector of nuisance parameters, $h: \mathfrak{T} \rightarrow \mathbb{R}^d$ is the regressor of the nuisance experimental conditions, and $\varepsilon_1, \dots, \varepsilon_N$ are independent and identically distributed random errors with zero mean and variance $\sigma^2 < \infty$. Suppose that we aim to estimate a system of $s$ contrasts $Q^T \tau$, where $\tau = \big(\tau_1, \ldots, \tau_v \big)^T$ and $Q$ is a $v \times s$ matrix satisfying $Q^T 1_v  = 0_s$. We will assume that $Q$ has full rank $s$, unless stated otherwise.  Moreover, we will assume that we are interested in all treatments $1, \ldots, v$, i.e., each treatment is present in $Q$ (no row of $Q$ is $0_{s}^T$). We consider $\theta$ to be a vector of nuisance parameters. 

The model (\ref{eModel1}) can be expressed in the linear regression form
$$Y_i = f^T(x_i)\beta + \varepsilon_i, \quad i=1,\ldots, N, $$
where $x_i = (u(i),t(i)) \in \mathfrak{X}$, $\mathfrak{X} = \{1,\ldots, v\} \times \mathfrak{T}$, $f(u,t) = \big(e_u^T, h^T(t) \big)^T$, $\beta = \big(\tau^T,\theta^T\big)^T$.
The objective of the experiment is to estimate a system of contrasts $K^T \beta$, where $K^T = \big(Q^T, 0_{s \times d} \big)$.

Let the \emph{approximate} design of experiment (or, in short, design) be a function $\xi: \mathfrak{X} \rightarrow [0,1]$, such that $\sum\limits_{x \in \mathfrak{X}} \xi(x) = 1$, where $\xi(x)$ represents the proportion of trials to be performed in $x \in \mathfrak{X}$. Hence, an \emph{exact} design of experiment of size $N$ is represented by a function $\xi: \mathfrak{X} \rightarrow \{0,1/N,2/N,\ldots,1\}$, such that $\sum\limits_{x \in \mathfrak{X}} \xi(x) = 1$, where $N\xi(x)$ is the number of trials in the design point $x \in \mathfrak{X}$.

The information matrix of the design $\xi$ for estimating $K^T\beta$ is the non-negative definite matrix (see \cite{puk})
\begin{equation}\label{eInfMat}
	N_K(\xi)= \mathrm{min}_{L \in \mathbb{R}^{s \times m}: LK=I_s} LM(\xi)L^T,
\end{equation}
where $M(\xi)=\sum_{x \in \mathfrak{X}} \xi(x)f(x)f^T(x)$ is the moment matrix of the design $\xi$ and the minimization is taken with respect to the Loewner ordering $\preceq$.
It is well known that the system $K^T \beta$ is estimable if and only if $\mathcal{C}(K) \subseteq \mathcal{C}(M(\xi))$. When $K^T\beta$ is estimable under $\xi$, we say that $\xi$ is feasible for $K^T\beta$. In such a case, the information matrix of $\xi$ is $N_K(\xi) = (K^T M^-(\xi) K)^{-1}$, where $M^-(\xi)$ is a generalized inverse of $M(\xi)$. 

Let $\Phi: \mathfrak{S}^s_+ \rightarrow \mathbb{R}$ be an optimality criterion. Then, a design $\xi^*$ is said to be $\Phi$\emph-optimal if it maximizes $\Phi\big(N_K(\xi)\big)$ among all feasible designs $\xi$. A widely used class of optimality criteria are the Kiefer's $\Phi_p$ criteria. Let $H$ be a positive definite $s \times s$ matrix  with eigenvalues $\lambda_1(H), \ldots, \lambda_s(H)$, and let $\lambda_\mathrm{min}(H)$ be the smallest eigenvalue of $H$. Then,
$$
\Phi_p(H)=
\begin{cases}
\; \Big(\frac{1}{s} \sum\limits_{j=1}^{s} \lambda_j^p(H) \Big)^{1/p}, & p \in (-\infty, 0), \\
\; \Big(\prod\limits_{j=1}^{s} \lambda_j(H) \Big)^{1/s}, & p=0, \\
\; \lambda_\mathrm{min}(H), & p=-\infty.
\end{cases}
$$
If $H$ is singular, we set $\Phi_p(H)=0$.
For $p=0$, $-1$ and $-\infty$, we obtain the $D$-, $A$- and $E$- optimality criterion, respectively. Note that $\Phi_p$ criteria are information functions (see \cite{puk}), in particular they are Loewner isotonic, positively homogeneous and concave.
\\

We will investigate further the properties of experimental designs in model (\ref{eModel1}). The moment matrix of a design $\xi$ may be expressed in the form
$$M(\xi)=
\begin{bmatrix}
M_{11}(\xi) & M_{12}(\xi) \\
M_{12}^T(\xi) & M_{22}(\xi)
\end{bmatrix},$$
where
\begin{eqnarray*}
	M_{11}(\xi)&=&\mathrm{diag}\left( \sum_{t \in \T}\xi(1,t), \dots, \sum_{t \in \T}\xi(v,t)  \right),\\
	M_{12}(\xi)&=&\left(\sum_{t \in \T}\xi(1,t)h(t), \ldots, \sum_{t \in \T}\xi(v,t)h(t) \right)^T,\\
	M_{22}(\xi)&=&\sum_{t \in \T} \left(\sum_{u=1}^{v}\xi(u,t) \right) h(t)h^T(t).
\end{eqnarray*}

Let us denote the Schur complement of the moment matrix $M(\xi)$ as $M_\tau(\xi) = M_{11}(\xi)-M_{12}(\xi) M^-_{22}(\xi) M_{21}(\xi)$. It is simple to show that the system $K^T\beta$ is estimable under a design $\xi$ if and only if $\mathcal{C}(M_\tau(\xi)) \subseteq \mathcal{C}(Q)$. If $K^T \beta$ is estimable under $\xi$, the information matrix of $\xi$ is $N_K(\xi)=(Q^T M_\tau^-(\xi) Q)^{-1}.$

\section{Optimal Approximate Designs}\label{sBalanced}

\subsection{Preliminaries}

We say that $w$ is a \emph{treatment proportions design} if it is a design in the marginal model without nuisance effects
\begin{equation}\label{eModelWithoutTrend}
	Y_i=\tau_{u(i)}+\varepsilon_i, \quad i=1,\dots,N.
\end{equation}
That is, $w$ is a function from $\{1,\ldots,v\}$ to $[0,1]$ satisfying $\sum_u w(u) = 1$. For a design $\xi$ of (\ref{eModel1}), the marginal design defined by $w(u) = \sum_t \xi(u,t)$ for all $u$ represents the total weights of individual treatments in $\xi$, and it will be called the \emph{treatment proportions design of $\xi$}. Since a design $w$ always provides $v$ weights $w(1), \ldots, w(v)$, we will often equivalently denote $w$ as a $v \times 1$ vector of weights $w = \big(w_1, \ldots, w_v \big)^T$. Note that if $\xi$ is an exact design of size $N$ and $w$ is its treatment proportions design, then $Nw$ is the vector of replications of treatments in $\xi$.

The properties of a treatment design $w$ in model (\ref{eModelWithoutTrend}) are generally very simple to analyze. For instance, it is straightforward to show that the moment matrix of $w$ is $M(w)=\mathrm{diag}(w)$. Moreover, the set of contrasts $Q^T \tau$ is estimable in (\ref{eModelWithoutTrend}) under $w$ if and only if $w_u > 0$ for all $u$. In such a case, the information matrix of $w$ is evidently $N_Q(w)=\big(Q^T\mathrm{diag}(w^{-1})  Q\big)^{-1}$.
\bigskip

Similarly to the treatment replications design, we define \emph{nuisance conditions design} $\alpha$ to be a design in the marginal model without treatments
\begin{equation}\label{eModelWithoutTreatments}
	Y_i=h^T(t(i))\theta+\varepsilon_i, \quad i=1,\dots,N,
\end{equation}
i.e., $\alpha$ is a function from $\T$ to $[0,1]$ that satisfies $\sum_t \alpha(t) = 1$. For a design $\xi$ of (\ref{eModel1}), the marginal design $\alpha(t) = \sum_u \xi(u,t)$ for all $t$ defines the proportions of trials to be performed under particular nuisance conditions, and it will be called the \emph{nuisance conditions design of $\xi$}.

\begin{proposition}\label{pNuisanceReducesInformation}
	Let $\xi$ be a design in model (\ref{eModel1}) and let $w$ be its treatment proportions design. Then, $N_K(\xi) \preceq N_Q(w)$.
\end{proposition}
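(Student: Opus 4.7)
My plan is to prove the Loewner inequality by working directly with the variational definition of the information matrix in (\ref{eInfMat}). The idea is to bound the Loewner-minimum defining $N_K(\xi)$ from above by restricting the class of admissible matrices $L$, in such a way that what remains is precisely the minimisation that defines $N_Q(w)$ in the marginal model (\ref{eModelWithoutTrend}).

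First I would exploit the block structure $K^T = (Q^T, 0_{s \times d})$ to decompose any admissible $L \in \mathbb{R}^{s \times (v+d)}$ as $L = (L_1, L_2)$ with $L_1 \in \mathbb{R}^{s \times v}$ and $L_2 \in \mathbb{R}^{s \times d}$; the constraint $LK = I_s$ then reduces to $L_1 Q = I_s$, with $L_2$ unrestricted. Next I would restrict the minimum in (\ref{eInfMat}) to the subclass with $L_2 = 0$. Since any Loewner lower bound on $\{L M(\xi) L^T\}$ over the larger class is automatically a Loewner lower bound over the smaller one, the Loewner-minimum over a subset can only be larger, so
\[
N_K(\xi) \preceq \min_{L_1 : L_1 Q = I_s} L_1 M_{11}(\xi) L_1^T.
\]
By the block form of $M(\xi)$ recalled in the preliminaries, $M_{11}(\xi) = \diag(w)$, so the right-hand side is exactly the Pukelsheim information matrix for $Q^T \tau$ in the marginal model (\ref{eModelWithoutTrend}) under the design $w$, that is, $N_Q(w)$, and the proposition follows.

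The only point that requires care is being explicit about the direction of the Loewner inequality when the feasible set of the minimisation shrinks; this is a standard fact but worth spelling out. A minor bookkeeping issue is the non-estimable case: if $w_u = 0$ for some $u$, then the $u$-th rows of $M_{11}(\xi)$ and $M_{12}(\xi)$, and hence of $M_\tau(\xi)$, vanish, so $K^T \beta$ is automatically non-estimable under $\xi$ as well; both sides of the inequality are then interpreted via the general minimum definition (\ref{eInfMat}) and the argument applies verbatim.
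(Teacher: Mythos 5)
Your proposal is correct and is essentially the paper's own proof: both decompose $L=(L_1,L_2)$ so that $LK=I_s$ becomes $L_1Q=I_s$ with $L_2$ free, restrict to $L_2=0$, and identify $\min_{L_1Q=I_s}L_1M_{11}(\xi)L_1^T$ with $N_Q(w)$ via $M_{11}(\xi)=\diag(w)$. Your extra remarks on the direction of the Loewner inequality under restriction and on the non-estimable case are sound but not needed beyond what the paper's one-line chain already contains.
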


The proof of Proposition \ref{pNuisanceReducesInformation} and all other proofs are deferred to the appendix.
The proposition shows that by introducing nuisance effects, the information about the contrasts of interests can not increase. However, for a large class of designs, the information is exactly retained.

We will say that a design $\xi$ with its treatment design $w$ is \emph{resistant to nuisance effects}, or \emph{nuisance resistant} for a given system of contrasts $Q$, if it satisfies
\begin{equation}\label{eNuisanceResistant}
	\begin{bmatrix}
		\frac{1}{w_1}\sum\limits_{t\in \T} \xi(1,t)h(t), &  \ldots, & \frac{1}{w_v}\sum\limits_{t\in \T} \xi(v,t)h(t)
	\end{bmatrix} Q = 0.
\end{equation}

The following proposition justifies this definition.

\begin{proposition}\label{pBalanceSameIM}
	Let $\xi$ be a nuisance resistant design with its treatment proportions design $w>0$, then
	(i) $\xi$ is feasible for $K^T\beta$,
	(ii) $K^T M^-(\xi)K = Q^T M^{-1}(w)Q$ and
	(iii) $\xi$ has the same information matrix as $w$, i.e., $N_K(\xi) = N_Q(w)$.
\end{proposition}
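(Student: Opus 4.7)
The plan is to translate the nuisance-resistance condition into a clean matrix identity and then chain three standard facts: the Schur-complement formula for the information matrix in a partitioned model, the invariance $Q^T M_\tau^-(\xi) Q$ under the choice of generalized inverse once $Q$ lies in the column space of $M_\tau(\xi)$, and the assumption $w>0$ which makes $M_{11}(\xi) = \diag(w)$ invertible.

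First I would rewrite the resistance condition \eqref{eNuisanceResistant} in matrix form. Since $M_{11}(\xi) = \diag(w)$ and $w > 0$, condition \eqref{eNuisanceResistant} says precisely that $M_{12}^T(\xi) M_{11}^{-1}(\xi) Q = 0_{d\times s}$. Using this, I would compute
\begin{equation*}
  M_\tau(\xi)\, M_{11}^{-1}(\xi)\, Q
  = \bigl( M_{11}(\xi) - M_{12}(\xi) M_{22}^-(\xi) M_{12}^T(\xi) \bigr) M_{11}^{-1}(\xi) Q
  = Q - M_{12}(\xi) M_{22}^-(\xi)\cdot 0 = Q.
\end{equation*}
This single identity does most of the work: it shows $Q \in \mathcal{C}(M_\tau(\xi))$, i.e.\ $\mathcal{C}(Q) \subseteq \mathcal{C}(M_\tau(\xi))$, so by the estimability criterion stated in the preliminaries, $\xi$ is feasible for $K^T\beta$. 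This settles part (i).

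Next, for part (ii), I would invoke the standard block-matrix identity that $K^T M^-(\xi) K = Q^T M_\tau^-(\xi) Q$ whenever $K = \bigl(Q^T,\, 0\bigr)^T$ and $K^T\beta$ is estimable (this is essentially the content of the formula $N_K(\xi) = (Q^T M_\tau^-(\xi) Q)^{-1}$ recalled in the preliminaries). To evaluate the right-hand side, I would use the identity $M_\tau(\xi) M_{11}^{-1}(\xi) Q = Q$ from the previous paragraph together with symmetry: setting $X := M_{11}^{-1}(\xi) Q$,
\begin{equation*}
  Q^T M_\tau^-(\xi) Q
  = X^T M_\tau(\xi) M_\tau^-(\xi) Q
  = X^T Q
  = Q^T M_{11}^{-1}(\xi) Q
  = Q^T \diag(w^{-1}) Q,
\end{equation*}
where in the middle step I used $M_\tau(\xi) M_\tau^-(\xi) Q = Q$, which is valid since $Q \in \mathcal{C}(M_\tau(\xi))$ and hence the projector-like action is the identity on $Q$. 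This establishes (ii).

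Finally, (iii) follows by inversion: since $Q$ has full column rank $s$ and $w>0$, the matrix $Q^T \diag(w^{-1}) Q$ is positive definite, hence invertible, and
\begin{equation*}
  N_K(\xi) = \bigl(K^T M^-(\xi) K\bigr)^{-1}
  = \bigl(Q^T \diag(w^{-1}) Q\bigr)^{-1}
  = N_Q(w).
\end{equation*}
The only subtle point, and the part requiring the most care, is the invariance step in (ii): one has to justify that $Q^T M_\tau^-(\xi) Q$ does not depend on the choice of generalized inverse and that $M_\tau(\xi) M_\tau^-(\xi) Q = Q$. Both follow from $\mathcal{C}(Q) \subseteq \mathcal{C}(M_\tau(\xi))$, so everything hinges on the single clean calculation $M_\tau(\xi) M_{11}^{-1}(\xi) Q = Q$ supplied by the nuisance-resistance condition.
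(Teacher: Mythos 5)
Your proof is correct, and it runs on the same engine as the paper's, just packaged one level down. The paper encodes nuisance resistance together with ``$w$ is the treatment proportions design of $\xi$'' as the single full-moment-matrix identity $M(\xi)GK=K$ with $G=\diag(w^{-1},0_d)$ a generalized inverse of $\diag(w,0_d)$ (Lemma \ref{lMGA}), and then applies a reduction lemma in the style of Theorem 8.13 of \citet{puk} (Lemma \ref{lSameIM}) to get $K^TM^-(\xi)K=K^TGK=Q^T\diag(w^{-1})Q$. You instead pass to the Schur complement and prove the equivalent identity $M_\tau(\xi)M_{11}^{-1}(\xi)Q=Q$, then run the identical generalized-inverse manipulation (from $C=AX$ with $A$ symmetric, conclude $C^TA^-C=X^TC$). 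The two identities carry the same information: unpacking $M(\xi)GK=K$ blockwise gives exactly $M_{11}(\xi)\diag(w^{-1})Q=Q$ and $M_{12}^T(\xi)\diag(w^{-1})Q=0$, from which your identity follows immediately. What differs is the bookkeeping: your route leans on the preliminaries' stated-but-unproved formula $K^TM^-(\xi)K=Q^TM_\tau^-(\xi)Q$, whereas the paper's route stays with $M(\xi)$ and $K$ throughout and produces a lemma that is reused verbatim in the proof of Theorem \ref{tSuffAndNecc}. One small remark: the feasibility criterion you invoke, $\mathcal{C}(Q)\subseteq\mathcal{C}(M_\tau(\xi))$, is the correct one, even though the preliminaries print the inclusion in the opposite direction (evidently a typo); your usage is the standard estimability condition and your argument for it is sound.
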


Note that the conditions of resistance to nuisance effects have also another desirable property: they are invariant with respect to a regular reparametrization of the nuisance regressors. That is, a design is resistant to nuisance effects with respect to nuisance regressors $h$ if and only if it is resistant to nuisance effects with respect to nuisance regressors $\tilde{h} = Rh$, where $R$ is any non-singular $d \times d$ matrix.
\bigskip

In general, the class of designs resistant to nuisance effects depends on the chosen system of contrasts $Q$. Nevertheless, as we show, there is a large subclass of nuisance resistant designs that is invariant to the choice of $Q$, i.e., these designs satisfy (\ref{eNuisanceResistant}) for any system of contrasts.

We will say that a design $\xi$ of (\ref{eModel1}) with its treatment design $w>0$ is \emph{balanced} if it satisfies
\begin{equation}\label{eBalanceCondGeneral2}
	\frac{1}{w_1}\sum_{t\in \T} \xi(1,t)h(t) = \frac{1}{w_2}\sum_{t\in \T} \xi(2,t)h(t) = \ldots = \frac{1}{w_v}\sum_{t\in \T} \xi(v,t)h(t).
\end{equation}

If $\xi$ is balanced, then for any $k \in \{1,\ldots,d\}$ the vector 
$$s_k:=
\Big(
\frac{1}{w_1}\sum_{t\in \T} \xi(1,t)h_k(t),  \ldots, \frac{1}{w_v}\sum_{t\in \T} \xi(v,t)h_k(t)
\Big)^T$$
satisfies $s_k=a_k1_v$ for some $a_k \in \mathbb{R}$.
Since $Q$ is a matrix of contrasts, we have $1_v^TQ=0_s^T$ and hence $s_k^T Q=0_s^T$. It follows that a balanced design $\xi$ is indeed nuisance resistant. 

When the matrix of contrasts $Q$ attains the maximum rank, $v-1$, the null space $\mathcal{N}(Q^T)$ has dimension $1$ and it consists of vectors of the form $a1_{v}$ for $a \in \mathbb{R}$. Hence, for such $Q$, the balanced designs are the only nuisance resistant designs; i.e., in this specific but frequent case, the notions of  resistance to nuisance effects and balancedness coincide.
One consequence of this observation is that for given nuisance regressors $h$, the class of balanced designs is the intersection of the sets of nuisance resistant designs with respect to all possible choices of contrast matrices $Q$.

We remark that the conditions (\ref{eBalanceCondGeneral2}) mean that a design $\xi$ is balanced with respect to the nuisance effects. That is, for each regressor $h_k$, the weighted average of the values $h_k(t)$ with weights $\xi(u,t)/w_u$, $t \in \T$, is the same for each treatment $u$. Balancedness can also be understood geometrically: assume that for each $u$ we calculate the  barycentre of the $n$ points $h(t) \in \mathbb{R}^d$ with weights $\xi(u,t)/w_u$, $t \in \T$. Then, these barycentres must be the same for all treatments $u$.
\bigskip

A typical experimental situation is that we need to perform the same number of trials, usually one, under each nuisance condition $t \in \T$. In this case, it is straightforward to show that the conditions (\ref{eBalanceCondGeneral2}) may be expressed in a more compact form as follows.

\begin{proposition}\label{pBalanceCond}
	Let $\xi$ be a design which assigns the same weight to each nuisance condition, i.e., the nuisance conditions design of $\xi$ is $\alpha = 1_n/n$. Then, $\xi$ satisfies (\ref{eBalanceCondGeneral2}) if and only if it satisfies
	\begin{equation}\label{eBalanceCondGeneral3}
		\frac{1}{w_u}\sum_{t\in \T} \xi(u,t)h(t)=\frac{1}{n}\sum_{t\in \T} h(t) \text{ for all } u \in \{1,\ldots,v\}.
	\end{equation}
\end{proposition}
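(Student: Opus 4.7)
The plan is to prove the equivalence directly, noting that the implication from (\ref{eBalanceCondGeneral3}) to (\ref{eBalanceCondGeneral2}) is immediate since (\ref{eBalanceCondGeneral3}) forces each of the $v$ vectors to equal the common value $\frac{1}{n}\sum_{t\in\T}h(t)$. So the only content is the forward direction: assume (\ref{eBalanceCondGeneral2}) and identify the common value of the vectors $\frac{1}{w_u}\sum_{t\in\T}\xi(u,t)h(t)$.

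First I would denote this common value by $c \in \mathbb{R}^d$, so that $\sum_{t\in\T}\xi(u,t)h(t) = w_u c$ for every $u \in \{1,\ldots,v\}$. Summing this identity over $u$ and interchanging the order of summation on the left,
\begin{equation*}
\sum_{t\in\T} h(t) \sum_{u=1}^v \xi(u,t) = c\sum_{u=1}^v w_u.
\end{equation*}
The inner sum on the left is precisely $\alpha(t)$, while $\sum_u w_u = 1$ since $w$ is a treatment proportions design. Using the hypothesis $\alpha = 1_n/n$, the left-hand side collapses to $\frac{1}{n}\sum_{t\in\T}h(t)$, giving $c = \frac{1}{n}\sum_{t\in\T}h(t)$, which is exactly (\ref{eBalanceCondGeneral3}).

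There is no real obstacle here; the argument is a one-line exchange-of-summation calculation, and the only point worth being careful about is that $w > 0$ (implicit in the definition of balancedness introduced before the proposition) ensures the vectors in (\ref{eBalanceCondGeneral2}) are well defined, so that we may legitimately denote their common value by $c$ and clear the denominators.
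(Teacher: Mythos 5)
Your proof is correct, and it is precisely the ``straightforward'' argument the paper has in mind (the paper omits a written proof of Proposition~\ref{pBalanceCond}, stating only that the equivalence is straightforward). The key step --- summing $\sum_{t}\xi(u,t)h(t)=w_u c$ over $u$, interchanging summations to produce $\sum_t \alpha(t)h(t)=\frac{1}{n}\sum_t h(t)$, and using $\sum_u w_u=1$ to identify $c$ --- is exactly where the hypothesis $\alpha=1_n/n$ enters, and you have placed it correctly.
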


In the case of an exact design $\xi$ assigning one trial to each nuisance condition, the balancedness of $\xi$ has a straightforward interpretation. Suppose, for instance, that the nuisance conditions represent time and $h_1(t)$ is proportional to the room temperature at time $t$. For each treatment $u$, let $T_u$ be the average temperature at the times of trials with the treatment $u$. Then, the balance conditions for $h_1(t)$ mean that the temperature conditions are ``fair'' for all treatments in the sense that the average temperatures $T_u$ are the same: $T_u \equiv T$ for all $u$.
\bigskip

Let $w$ be a treatment proportions design and $\alpha$ be a nuisance conditions design. Then, a design $\xi$ is the \emph{product design} of $w$ and $\alpha$ if it satisfies
\begin{equation*}
	\xi(u,t)= w(u) \alpha(t) \text{ for all } u \in \{1,\ldots,v\}, t \in \T,
\end{equation*}
which we denote $\xi = w \otimes \alpha$. Note that any product design $w \otimes \alpha$ satisfies $\frac{1}{w_u}\sum_t \xi(u,t)h(t) = \sum_t \alpha(t) h(t)$ for all $u$, therefore the product design is balanced and consequently, it is also resistant to nuisance effects.

\subsection{Conditions of Optimality}

The following theorem shows that the optimality of treatment proportions is a necessary condition of the optimality of a design in model (\ref{eModel1}).

\begin{theorem}\label{tOptWeights}
	Let $\Phi$ be an information function, let $\xi^*$ be a $\Phi$-optimal design in model (\ref{eModel1}) and let $w^*$ be its treatment proportions design. Then, $w^*$ is $\Phi$-optimal in (\ref{eModelWithoutTrend}).
\end{theorem}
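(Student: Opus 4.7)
The proof is short and essentially chains together Propositions \ref{pNuisanceReducesInformation} and \ref{pBalanceSameIM} with the Loewner isotonicity of $\Phi$. The plan is to argue by contradiction: if $w^*$ were not $\Phi$-optimal in the marginal model (\ref{eModelWithoutTrend}), we can lift any strictly better treatment proportions design back to a design in (\ref{eModel1}) via the product construction, contradicting the $\Phi$-optimality of $\xi^*$.

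In detail, I would suppose for contradiction that there is a treatment proportions design $\tilde{w}$ with $\Phi(N_Q(\tilde w)) > \Phi(N_Q(w^*))$. Since information functions are non-negative on $\mathfrak{S}^s_+$ and vanish on singular matrices, and since $Q^T\tau$ is estimable under a treatment proportions design if and only if all weights are strictly positive, the strict inequality together with $\Phi(N_Q(w^*)) \ge 0$ forces $\tilde{w}_u > 0$ for every $u$. Now pick any nuisance conditions design $\alpha$ (for concreteness $\alpha = 1_n/n$) and define the product design $\tilde\xi = \tilde w \otimes \alpha$. As observed in the paragraph preceding this subsection, every product design is balanced, hence resistant to nuisance effects; together with $\tilde w > 0$, Proposition \ref{pBalanceSameIM}(iii) yields $N_K(\tilde\xi) = N_Q(\tilde w)$.

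On the other hand, Proposition \ref{pNuisanceReducesInformation} applied to $\xi^*$ gives $N_K(\xi^*) \preceq N_Q(w^*)$, and Loewner isotonicity of the information function $\Phi$ implies $\Phi(N_K(\xi^*)) \le \Phi(N_Q(w^*))$. Combining the two pieces,
\[
\Phi(N_K(\tilde\xi)) = \Phi(N_Q(\tilde w)) > \Phi(N_Q(w^*)) \ge \Phi(N_K(\xi^*)),
\]
which contradicts the $\Phi$-optimality of $\xi^*$ in model (\ref{eModel1}).

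The only thing that deserves a moment of care is the degenerate case where $\tilde w$ has a zero component; this is resolved by the convention that $\Phi$ vanishes on singular information matrices together with the non-negativity of $\Phi(N_Q(w^*))$. Beyond that, no real obstacle arises: the two propositions have already done all the structural work, and the product design serves as the canonical lift from the marginal to the full model.
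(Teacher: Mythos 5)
Your proof is correct and follows essentially the same route as the paper: Proposition \ref{pNuisanceReducesInformation} plus Loewner isotonicity to get $\Phi(N_K(\xi^*)) \le \Phi(N_Q(w^*))$, and the product design $\tilde w \otimes \alpha$ (balanced, hence nuisance resistant, so Proposition \ref{pBalanceSameIM}(iii) applies) as the lift of a hypothetically better $\tilde w$, yielding the contradiction. Your extra remark handling the case of a zero weight in $\tilde w$ via the convention $\Phi = 0$ on infeasible designs is a small point the paper leaves implicit, but it does not change the argument.
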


From Theorem \ref{tOptWeights} it follows that in order to find an optimal approximate design, we need to break up this process into two steps:
obtain the optimal treatment weights and then optimally allocate these weights to nuisance conditions. Note that finding a $\Phi$-optimal treatment design is a convex $v$-dimensional optimization problem
\begin{equation}\label{eMaxWeights}
	\max_{w>0, \, 1_v^Tw=1}\, \Phi((Q \diag(w^{-1})Q^T)^{-1}),
\end{equation}
which can usually be easily solved numerically, and often analytically, as we demonstrate in Subsection \ref{ssContrasts}.

Once the optimal treatment weights are obtained, we may allocate these weights to nuisance conditions using the following theorem, i.e., by choosing a nuisance resistant design.

\begin{theorem}\label{tBalanceOpt}
	Let $w^*$ be a $\Phi$-optimal treatment proportions design. Let $\xi^*$ be a nuisance resistant design with its treatment proportions design $w^*$. Then, $\xi^*$ is $\Phi$-optimal and it has the same information matrix as $w^*$, i.e., $N_K(\xi^*) = \big(Q^T \diag((w^*)^{-1}) Q\big)^{-1}$.
\end{theorem}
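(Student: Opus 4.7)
The plan is to assemble the previously established results in a short chain. The statement decomposes into two assertions: the explicit identity for $N_K(\xi^*)$, and the $\Phi$-optimality of $\xi^*$ among all designs in model (\ref{eModel1}). Both follow cleanly from Propositions \ref{pNuisanceReducesInformation} and \ref{pBalanceSameIM}, once one observes that the $\Phi$-optimal treatment proportions design $w^*$ must satisfy $w^*>0$; otherwise $Q^T\tau$ would not be estimable by the stated estimability criterion, making $\Phi(N_Q(w^*))$ the value at a singular matrix, which cannot be optimal in any non-degenerate instance.

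First I would invoke Proposition \ref{pBalanceSameIM}(iii) directly on $\xi^*$. Since $\xi^*$ is nuisance resistant with $w^*>0$, this immediately yields the identity
$$
N_K(\xi^*) = N_Q(w^*) = \bigl(Q^T \diag((w^*)^{-1}) Q\bigr)^{-1},
$$
settling the information-matrix formula.

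For the optimality claim, let $\xi$ be any feasible design in (\ref{eModel1}) and denote its treatment proportions design by $w$. Proposition \ref{pNuisanceReducesInformation} gives $N_K(\xi) \preceq N_Q(w)$, and since $\Phi$ is Loewner isotonic (as an information function in the sense of \cite{puk}), this yields $\Phi(N_K(\xi)) \le \Phi(N_Q(w))$. The $\Phi$-optimality of $w^*$ in the marginal model (\ref{eModelWithoutTrend}) then gives $\Phi(N_Q(w)) \le \Phi(N_Q(w^*))$, and chaining these with the identity above,
$$
\Phi(N_K(\xi)) \le \Phi(N_Q(w)) \le \Phi(N_Q(w^*)) = \Phi(N_K(\xi^*)).
$$
Infeasible designs $\xi$ satisfy $\Phi(N_K(\xi))=0$ by the convention on singular information matrices, so the inequality holds trivially in that case as well. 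This proves $\Phi$-optimality of $\xi^*$.

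There is no genuine obstacle in the argument; the construction of $\xi^*$ has been engineered so that the upper bound of Proposition \ref{pNuisanceReducesInformation} is attained with equality, and the marginal optimality of $w^*$ supplies the global bound. The only subtleties to attend to are the strict positivity $w^*>0$ (needed to apply Proposition \ref{pBalanceSameIM}) and the use of \emph{Loewner} isotonicity of $\Phi$ rather than merely scalar monotonicity, both of which are immediate from the setup.
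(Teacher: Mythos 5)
Your proof is correct and follows essentially the same route as the paper: apply Proposition \ref{pBalanceSameIM} (with $w^*>0$ from feasibility of the optimal marginal design) to get $N_K(\xi^*)=N_Q(w^*)$, then chain Proposition \ref{pNuisanceReducesInformation}, the isotonicity of $\Phi$, and the marginal optimality of $w^*$ to bound $\Phi(N_K(\xi))$ for every competing design. No gaps.
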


The balanced designs are nuisance resistant, therefore, the balanced designs with $\Phi$-optimal treatment weights $w^*$ are $\Phi$-optimal. Moreover, they have the same information matrix as $w^*$. Note that the set of optimal balanced designs is never empty, because it contains the set of product designs $w^* \otimes \alpha$ with any $\alpha$. Since $\alpha$ is any nuisance conditions design, the class of $\Phi$-optimal designs for model (\ref{eModel1}) is very large (unless $n=1$).

Similar results on optimality of product designs are given by \cite{Schwabe} (cf. Theorem 3.2) in a general additive model $Y_i = \beta_0 + f_1^T(u_1(i))\beta_1 + f_2^T(u_2(i))\beta_2 + \varepsilon_i$. Note that general nuisance resistant designs, because they need not have product structure, are not covered by \cite{Schwabe}.
\bigskip

Theorem \ref{tOptWeights} provides necessary conditions of optimality and Theorem \ref{tBalanceOpt} provides sufficient conditions of optimality.
It turns out that for the wide class of strictly concave optimality criteria, we can provide conditions that are both necessary and sufficient for optimality of a design $\xi$ in model (\ref{eModel1}). 

\begin{theorem}\label{tSuffAndNecc}
	Let $\Phi$ be a strictly concave information function. Then, a design $\xi$ is $\Phi$-optimal if and only if (i) its treatment proportions design $w$ is $\Phi$-optimal in model (\ref{eModelWithoutTrend}) and (ii) $\xi$ is resistant to nuisance effects.
\end{theorem}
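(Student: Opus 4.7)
The plan is to dispatch sufficiency as an immediate consequence of Theorem \ref{tBalanceOpt}---the hypotheses (i) and (ii) are exactly what that theorem requires---and to handle necessity in two distinct stages: first a convex-combination argument exploiting strict concavity of $\Phi$ to force an equality of information matrices, then a Schur-complement identity that rewrites this equality as the nuisance-resistance condition (\ref{eNuisanceResistant}).

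For necessity, I fix a $\Phi$-optimal design $\xi^*$ with treatment proportions design $w^*$. Theorem \ref{tOptWeights} immediately delivers (i). Applying Theorem \ref{tBalanceOpt} to any product design $w^* \otimes \alpha$ shows that the optimal value of $\Phi$ in model (\ref{eModel1}) equals $\Phi^* := \Phi(N_Q(w^*))$, while Proposition \ref{pNuisanceReducesInformation} gives $N_K(\xi^*) \preceq N_Q(w^*)$, so $\Phi(N_K(\xi^*)) = \Phi^*$. I would then form $\xi_\lambda := \lambda \xi^* + (1-\lambda)(w^* \otimes \alpha)$ for $\lambda \in (0,1)$. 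Loewner concavity of the information-matrix mapping in the moment matrix yields $N_K(\xi_\lambda) \succeq \lambda N_K(\xi^*) + (1-\lambda) N_Q(w^*)$; combined with Loewner isotonicity and strict concavity of $\Phi$, this produces $\Phi(N_K(\xi_\lambda)) > \Phi^*$ whenever $N_K(\xi^*) \neq N_Q(w^*)$, contradicting the optimality of $\xi_\lambda$. Hence $N_K(\xi^*) = N_Q(w^*)$.

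The remaining step is to translate this matrix identity into (\ref{eNuisanceResistant}). Writing $\bar h_u := w_u^{-1} \sum_{t \in \T} \xi^*(u,t) h(t)$ and $H := (\bar h_1, \ldots, \bar h_v)$, one has $M_{12}(\xi^*) = \diag(w^*) H^T$, and a Schur-complement/Woodbury computation should produce
$$Q^T M_\tau^-(\xi^*) Q - Q^T \diag((w^*)^{-1}) Q = (HQ)^T S^- (HQ),$$
where $S := M_{22}(\xi^*) - M_{21}(\xi^*) \diag((w^*)^{-1}) M_{12}(\xi^*)$. Since the equality $N_K(\xi^*) = N_Q(w^*)$ makes the left-hand side vanish while the right-hand side is a Loewner-nonnegative quadratic form, this forces $HQ = 0$, which is exactly (\ref{eNuisanceResistant}). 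The main obstacle I anticipate is making this computation rigorous when the Schur complement $S$ is singular: one needs the range condition $\mathcal{C}(HQ) \subseteq \mathcal{C}(S)$, which I expect to derive from the feasibility $\mathcal{C}(K) \subseteq \mathcal{C}(M(\xi^*))$, so that the vanishing of $(HQ)^T S^- (HQ)$ actually forces $HQ = 0$ rather than merely $HQ \in \mathcal{N}(S)$.
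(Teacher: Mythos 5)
Your proposal is correct, but the route you take for the necessity of nuisance resistance is genuinely different from the paper's. The paper proves both directions at once through Pukelsheim's equivalence-theorem machinery: Lemma \ref{lGinvET} shows that $G=\diag((w^*)^{-1},0_d)$ satisfies the normality inequality, Lemma \ref{lAllOptimalDesigns} (Theorem 8.13 of \citet{puk}) then characterizes the $\Phi$-optimal designs as exactly those with $M(\xi)GK=K$, and Lemma \ref{lMGA} unpacks this matrix equation into the correct treatment proportions plus condition (\ref{eNuisanceResistant}). You instead extract the uniqueness of the optimal information matrix directly from strict concavity via the mixture $\xi_\lambda$ (the same consequence the paper invokes inside the proof of Lemma \ref{lGinvET}), and then make the information loss explicit through the Woodbury/Schur identity $Q^TM_\tau^-(\xi^*)Q = Q^T\diag((w^*)^{-1})Q + (HQ)^TS^-(HQ)$; the singular case is resolved exactly as you anticipate, since feasibility gives $M(\xi^*)X=K$ for some $X=(X_1^T,X_2^T)^T$, whence $HQ=-SX_2$, so $\mathcal{C}(HQ)\subseteq\mathcal{C}(S)$ and the vanishing of the quadratic form forces $HQ=0$. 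What the paper's route buys is reuse of the normality-inequality apparatus and avoidance of any generalized-inverse computation; what yours buys is a self-contained, quantitative argument that exhibits the exact Loewner gap between $Q^TM_\tau^-(\xi)Q$ and $Q^T\diag(w^{-1})Q$ as a nonnegative quadratic form in $HQ$, which incidentally reproves Propositions \ref{pNuisanceReducesInformation} and \ref{pBalanceSameIM}. One small point to make explicit in your first stage: information functions are positively homogeneous, so ``strict concavity'' must be read in the sense that two optimal information matrices with the same positive criterion value coincide (this is also the sense in which the paper asserts uniqueness of the optimal information matrix); with that reading your mixture argument closes as claimed.
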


Since the balanced designs are the only nuisance resistant designs for a system of contrasts of maximum rank, $v-1$, we obtain the following corollary.

\begin{corollary}\label{cBalancedAllOptimal}
	Let $\Phi$ be a strictly concave information function, and let $Q$ be a matrix of contrasts of rank $v-1$. Then, a design $\xi$ is $\Phi$-optimal for estimating $Q^T\tau$ if and only if its treatment proportions design is $\Phi$-optimal in (\ref{eModelWithoutTrend}) and $\xi$ is balanced.
\end{corollary}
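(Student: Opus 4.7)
The plan is to reduce the corollary directly to Theorem \ref{tSuffAndNecc}, so the only substantive work is showing that, under the rank-$(v-1)$ assumption on $Q$, resistance to nuisance effects is equivalent to balancedness. With that equivalence in hand, both directions of the ``iff'' follow by rewriting the conclusion of Theorem \ref{tSuffAndNecc}.

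First I would invoke Theorem \ref{tSuffAndNecc}: since $\Phi$ is a strictly concave information function, $\xi$ is $\Phi$-optimal if and only if (i) its treatment proportions design $w$ is $\Phi$-optimal in (\ref{eModelWithoutTrend}), and (ii) $\xi$ satisfies (\ref{eNuisanceResistant}). It therefore suffices to prove the equivalence between (\ref{eNuisanceResistant}) and (\ref{eBalanceCondGeneral2}) under the assumption $\mathrm{rank}(Q)=v-1$.

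For this equivalence, I would argue as already sketched in the text before the corollary. Condition (\ref{eNuisanceResistant}) says that each row of the $d \times v$ matrix whose $(k,u)$ entry is $w_u^{-1}\sum_{t\in\T}\xi(u,t)h_k(t)$ lies in $\mathcal{N}(Q^T)$; that is, the vector $s_k$ defined just after (\ref{eBalanceCondGeneral2}) satisfies $s_k\in\mathcal{N}(Q^T)$ for every $k \in \{1,\dots,d\}$. Because $Q$ is a contrast matrix of rank $v-1$, the null space $\mathcal{N}(Q^T)$ has dimension $1$ and contains $1_v$, hence $\mathcal{N}(Q^T)=\mathrm{span}(1_v)$. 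Consequently $s_k=a_k 1_v$ for some $a_k\in\mathbb{R}$ and every $k$, which is precisely the balancedness condition (\ref{eBalanceCondGeneral2}). The converse direction is already noted in the discussion following (\ref{eBalanceCondGeneral2}): if $s_k=a_k 1_v$ for every $k$, then $s_k^T Q=a_k 1_v^T Q=0_s^T$, so (\ref{eNuisanceResistant}) holds; this direction does not even require the rank assumption.

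Combining these two steps completes the proof. I do not expect any real obstacle: the entire argument is a one-dimensional linear-algebra observation about $\mathcal{N}(Q^T)$ together with a direct citation of Theorem \ref{tSuffAndNecc}. The only point requiring a little care is that one must read (\ref{eNuisanceResistant}) row by row in order to characterize each $s_k$ via $\mathcal{N}(Q^T)$, but this is routine.
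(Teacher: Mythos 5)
Your proof is correct and follows essentially the same route as the paper: the corollary is obtained by combining Theorem \ref{tSuffAndNecc} with the observation, already made in the text preceding the statement, that for a contrast matrix of rank $v-1$ the null space $\mathcal{N}(Q^T)$ equals $\mathrm{span}(1_v)$, so nuisance resistance and balancedness coincide. Nothing further is needed.
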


\subsection{Rank Deficient Subsystems}\label{sRankDeficient}

Until now, we always assumed that the $v \times s$ matrix $Q$ has full rank. However, there are some frequently used sets of contrasts that do not satisfy this assumption. Such subsystems of interest are called rank deficient subsystems; for a detailed study of such systems, see \cite{puk}. An example of a rank deficient subsystem is the set of contrasts determined by the matrix $Q=I_v-\frac{1}{v}J_v$ which aims at estimating the centered effects of treatments (see \cite{Pukelsheim83}).

In the rank deficient subsystems, the information matrix $N_K(\xi)$ is not well defined. Instead, following \cite{puk}, for a feasible design we define the matrix $C_K(\xi):=(K^T M^-(\xi)K)^+$, where the superscript $+$ denotes the Moore-Penrose inverse. For $K=\big(Q^T, 0\big)^T$, we get $C_K(\xi) = (Q^T M_\tau^-(\xi)Q)^+$.
Then, if $\Phi(N)$ depends only on the eigenvalues of $N$, instead of maximizing $\Phi$ defined on all eigenvalues of $N_K(\xi)$, we maximize $\Phi$ defined on the positive eigenvalues of $C_K(\xi)$.  

For the full rank subsystem, the eigenvalues of the information matrix $N_K(\xi)$  are the inverses of the eigenvalues of $K^TM^-(\xi)K$. Similarly, the matrix $C_K(\xi)$ satisfies that its non-zero eigenvalues are inverses of the non-zero eigenvalues of the matrix $K^TM^-(\xi)K$. Thus, at least in the sense of their spectra, the matrices $C_K(\xi)$ are an analogue to the information matrices for full rank subsystems.


In the rank deficient case, results analogous to the full rank case hold.
We will show that by introducing the nuisance effects, we cannot increase information about the treatment contrasts, as measured by $C_K(\xi)$.
The ordering of matrices $C_K(\xi)$ is induced by the inverse ordering of the matrices $K^T M^-(\xi) K$. For any design $\xi$, we obtain $K^T M^-(\xi) K = Q^TM_\tau^-(\xi)Q$ and for its treatment proportions design $Q^T M^-(w) Q = Q^TM_{11}^-(\xi)Q$. Moreover, $M_\tau(\xi) = M_{11}(\xi) - M_{12}(\xi) M_{22}^-(\xi) M_{12}^T(\xi) \preceq M_{11}(\xi) $, therefore there exist generalized inverses that satisfy $M_\tau^-(\xi) \succeq M_{11}^-(\xi) $ (see \cite{Wu}) and it follows that $K^T M^-(\xi) K \succeq Q^TM^-(w)Q$. As $\Phi(N)$ depends only on the eigenvalues of $N$ and the Moore-Penrose inverse $X^+$ has inverse non-zero eigenvalues of $X$, it implies that $\Phi\big((K^T M^-(\xi) K)^+\big) \geq \Phi\big((Q^TM^-(w)Q)^+\big)$, i.e., $\Phi(C_K(\xi)) \geq  \Phi(C_Q(w))$.
\\

From part (ii) of Proposition \ref{pBalanceSameIM} it follows that any nuisance resistant design $\xi$ has the same matrix $C_K(\xi)$ as its treatment proportions design, i.e., $C_K(\xi) = C_Q(w)$. Hence, Theorems \ref{tOptWeights} and \ref{tBalanceOpt} hold even in the rank deficient case. 

\begin{theorem} \label{tRankDefMain}	
	Let $\Phi$ be an information function and let $Q$ be a $v \times s$ matrix of contrasts with $\mathrm{rank}(Q) < s$. Let $w^*$ be a $\Phi$-optimal design for estimating $Q^T \tau$ in model (\ref{eModelWithoutTrend}). Then, the following holds
	\begin{enumerate}
		\item[(i)] Any nuisance resistant design $\xi$, whose treatment proportions design is $w^*$, is $\Phi$-optimal for estimating $Q^T\tau$ and $C_K(\xi^*) = \big(Q^T \diag((w^*)^{-1}) Q\big)^+$.
		\item[(ii)] If $\xi^*$ is a $\Phi$-optimal design in model (\ref{eModel1}) and $w^*$ is its treatment proportions design, then $w^*$ is $\Phi$-optimal for estimating $Q^T\tau$ in model (\ref{eModelWithoutTrend}).
	\end{enumerate}
\end{theorem}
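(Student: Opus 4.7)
The plan is to leverage two facts established just before the theorem statement: (a) passing from the marginal to the full model cannot improve the $\Phi$-value (the rank deficient analogue of Proposition \ref{pNuisanceReducesInformation}), namely $\Phi(C_K(\xi)) \leq \Phi(C_Q(w))$ whenever $\xi$ has treatment proportions $w$; and (b) Proposition \ref{pBalanceSameIM}(ii), which is a purely algebraic identity $K^T M^-(\xi) K = Q^T \diag(w^{-1}) Q$ for every nuisance resistant $\xi$ with $w > 0$, irrespective of the rank of $Q$. Applying the Moore--Penrose inverse to both sides of (b) yields $C_K(\xi) = C_Q(w) = (Q^T \diag(w^{-1}) Q)^+$, so the transition from a nuisance resistant $\xi$ to its treatment proportions design is loss-free.

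For part (i), I would fix a nuisance resistant $\xi$ with treatment proportions $w^*$. The claimed identity $C_K(\xi) = (Q^T \diag((w^*)^{-1}) Q)^+$ is then immediate from (b). To establish $\Phi$-optimality, let $\zeta$ be an arbitrary feasible design in model (\ref{eModel1}) with treatment proportions $w$; (a) gives $\Phi(C_K(\zeta)) \leq \Phi(C_Q(w))$, and the assumed optimality of $w^*$ in (\ref{eModelWithoutTrend}) gives $\Phi(C_Q(w)) \leq \Phi(C_Q(w^*)) = \Phi(C_K(\xi))$, which chained together yield $\Phi(C_K(\zeta)) \leq \Phi(C_K(\xi))$.

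For part (ii), I would argue that $w^*$ dominates any competitor $\tilde w > 0$ in the marginal model by lifting $\tilde w$ to model (\ref{eModel1}). For an arbitrary nuisance conditions design $\alpha$, the product design $\tilde \xi := \tilde w \otimes \alpha$ is balanced and hence nuisance resistant (as observed in the preliminaries), so (b) gives $\Phi(C_Q(\tilde w)) = \Phi(C_K(\tilde \xi))$. The assumed $\Phi$-optimality of $\xi^*$ gives $\Phi(C_K(\tilde \xi)) \leq \Phi(C_K(\xi^*))$, while (a) gives $\Phi(C_K(\xi^*)) \leq \Phi(C_Q(w^*))$; chaining these yields $\Phi(C_Q(\tilde w)) \leq \Phi(C_Q(w^*))$, as required. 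The only subtlety worth checking is that the proof of Proposition \ref{pBalanceSameIM}(ii) does not covertly use the full-rank hypothesis on $Q$ (which is the case, since the identity is purely algebraic in the moment matrix blocks); once this is confirmed, the rank deficient development proceeds as a faithful analogue of the full rank argument with the Moore--Penrose inverse replacing the ordinary inverse.
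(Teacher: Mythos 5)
Your proof is correct and follows essentially the same route as the paper: the paper establishes exactly your facts (a) and (b) in the paragraphs preceding the theorem (the monotonicity $\Phi(C_K(\xi)) \le \Phi(C_Q(w))$ via the generalized-inverse ordering of $M_\tau(\xi)$ and $M_{11}(\xi)$, and the identity $C_K(\xi)=C_Q(w)$ from Proposition~\ref{pBalanceSameIM}(ii), whose proof uses no rank assumption on $Q$) and then asserts that the arguments of Theorems~\ref{tOptWeights} and~\ref{tBalanceOpt} carry over, which is precisely the product-design lifting and chaining of inequalities you spell out. Your write-up merely makes that transfer explicit; note in passing that the displayed inequality in the paper's text reads $\Phi(C_K(\xi))\ge\Phi(C_Q(w))$, which is a sign typo for the $\le$ direction you correctly use.
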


In particular, we obtain optimality of balanced and product designs with optimal treatment weights.

\subsection{Optimal treatment proportions for selected systems of contrasts}\label{ssContrasts}

We say that the system of contrasts $Q^T\tau$ is \emph{completely symmetric} if $QQ^T$ is completely symmetric. It is easy to show that such $Q$ must satisfy $QQ^T=a(I_v-J_v/v)$ for some $a > 0$. We will show that some common systems of contrasts are completely symmetric.

We consider information functions $\Phi$ that are orthogonally invariant, i.e., $\Phi(UNU^T)=\Phi(N)$ for any orthogonal matrix $U$. Note that a function $\Phi$ is orthogonally invariant if and only if $\Phi(N)$ depends only on the eigenvalues of $N$ (see, e.g., \cite{Harman04} for further details).

\begin{theorem}\label{tCScontrasts}
	Let $Q^T\tau$ be a completely symmetric system of contrasts. Then the uniform treatment proportions design $\bar{w}=1_v/v$ is $\Phi$-optimal for estimating $Q^T\tau$ with respect to any orthogonally invariant information function $\Phi$.
\end{theorem}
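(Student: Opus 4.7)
The plan is to prove optimality of $\bar{w}=1_v/v$ by a symmetrization argument: for an arbitrary design $w>0$, I will average its permuted versions over all $v!$ permutations of $\{1,\dots,v\}$, recognize this average as $\bar{w}$, and use concavity of $\Phi\circ N_Q$ together with a permutation invariance induced by complete symmetry to conclude. (If $w$ has a zero component then $Q^T\tau$ is inestimable, $\Phi(N_Q(w))=0$, and optimality is automatic.)

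The first step is to establish, for every permutation $\pi$ of $\{1,\ldots,v\}$ with permutation matrix $P_\pi$ and for $w^\pi := P_\pi w$, the identity $\Phi(N_Q(w^\pi))=\Phi(N_Q(w))$. A direct manipulation gives
\begin{equation*}
Q^T \diag((w^\pi)^{-1}) Q = (P_\pi^T Q)^T \diag(w^{-1}) (P_\pi^T Q).
\end{equation*}
Since permutations fix $I_v-J_v/v$, the complete symmetry hypothesis $QQ^T = a(I_v-J_v/v)$ yields $(P_\pi^T Q)(P_\pi^T Q)^T = QQ^T$. From the full column rank of $Q$ one deduces the existence of an orthogonal $s\times s$ matrix $R_\pi$ such that $P_\pi^T Q = Q R_\pi$ (two matrices of equal shape, equal outer product, and full column rank differ by orthogonal right-multiplication). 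Substituting back and inverting gives $N_Q(w^\pi) = R_\pi^T N_Q(w) R_\pi$, and orthogonal invariance of $\Phi$ yields the desired equality.

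The second step is to average. Since each entry of $\frac{1}{v!}\sum_\pi w^\pi$ equals $\frac{(v-1)!}{v!}=\frac{1}{v}$, this average is exactly $\bar{w}$. The map $w\mapsto N_Q(w)$ is Loewner-concave (the moment matrix $M(w)=\diag(w)$ is linear in $w$, and the information matrix is Loewner-concave in the moment matrix), and $\Phi$ is Loewner-isotonic and concave; hence $w\mapsto\Phi(N_Q(w))$ is concave. Therefore
\begin{equation*}
\Phi(N_Q(\bar{w})) \;=\; \Phi\!\Big(N_Q\big(\tfrac{1}{v!}\textstyle\sum_\pi w^\pi\big)\Big) \;\ge\; \tfrac{1}{v!}\sum_\pi \Phi(N_Q(w^\pi)) \;=\; \Phi(N_Q(w)),
\end{equation*}
which proves $\Phi$-optimality of $\bar{w}$.

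The main delicate point is the passage from the outer-product equality $(P_\pi^T Q)(P_\pi^T Q)^T=QQ^T$ to the existence of the orthogonal right-multiplier $R_\pi$, which is where the full rank of $Q$ enters. For the rank-deficient case covered in Section \ref{sRankDeficient}, I would rerun the same scheme with $C_Q(w)=(Q^T\diag(w^{-1})Q)^+$ in place of $N_Q(w)$, noting that an orthogonal similarity preserves the non-zero spectrum on which $\Phi$ depends, and invoking the rank-deficient analogue of Loewner concavity used there.
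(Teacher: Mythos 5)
Your proof is correct and follows essentially the same symmetrization-plus-concavity route as the paper: show that every permutation $P_\pi w$ of $w$ yields an information matrix orthogonally similar to $N_Q(w)$ (hence an equal $\Phi$-value), average over all $v!$ permutations to recover $\bar w$, and invoke concavity of $w\mapsto\Phi(N_Q(w))$. The only cosmetic difference is that you construct the orthogonal similarity explicitly via $P_\pi^TQ=QR_\pi$, whereas the paper deduces it by comparing the nonzero spectra of $X^TX$ and $XX^T$; both hinge on the same identity $P^TQQ^TP=QQ^T$.
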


By a \emph{maximal system of orthonormal contrasts}, we mean a set of $v-1$ contrasts that are orthogonal to each other and have norm 1, i.e., $q_1, \ldots, q_{v-1}$ satisfy $q^T_i q_j = 0$ for $i \neq j$ and $q_i^Tq_i=1$ for all $i$. Note that a special case of the maximal system of orthonormal contrasts are the Helmert contrasts (see, e.g., \cite{CoxReid}, Appendix C).
Since $Q$ is a $v \times (v-1)$ matrix of orthonormal contrasts, the matrix $[Q, 1_v/\sqrt{v}]$ is orthogonal. It follows that $Q^TQ = I_{v-1}$ and $QQ^T = I_v - J_v/v$, thus it is a completely symmetric system. It is easy to verify that the information matrix of a treatment proportions design $w>0$ is $N_Q(w) = Q^TM(w)Q - Q^TM(w)J_vM(w)Q$ and in particular $N_Q(\bar{w})=v^{-1}I_{v-1}$.

Consider a system of centered treatment effects, or \emph{centered contrasts}, which is the system of contrasts $\tau_1 - \bar{\tau}, \ldots, \tau_v - \bar{\tau}$, where $\bar{\tau}$ is the mean of the treatment effects. That is, $Q=I_v-J_v/v$, which is a $v \times v$ matrix of rank $v-1$ and thus $Q^T\tau$ is a rank deficient system. Note that $QQ^T=Q$ is completely symmetric and hence the centered contrasts are a completely symmetric system of contrasts.
In Section 5 of the paper \cite{Pukelsheim83}, this system of contrasts was analyzed in great detail for a special case of model (\ref{eModel1}), the block designs, and the optimality of product designs with uniform treatment weights was obtained. The matrix $C_Q(\bar{w})$ of the uniform treatment design satisfies $C_Q(\bar{w}) = (vQ^TQ)^+ = v^{-1}Q = I_v/v - J_v/v^2$.

By a \emph{system of all pairwise comparisons} we mean the system of $\tau_i-\tau_j$ for all $i>j$ (considered in, e.g., \cite{BaileyCameron}). The corresponding $v \times \frac{v(v-1)}{2}$ matrix $Q$ satisfies $QQ^T=vI_v-J_v$ and thus the system is completely symmetric.

\begin{corollary}
	The uniform treatment design $\bar{w}$ is $\Phi$-optimal for estimating the system of orthonormal contrasts, the system of centered contrasts as well as the system of all pairwise comparisons, with respect to any orthogonally invariant information function $\Phi$.
\end{corollary}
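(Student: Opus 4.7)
The plan is to derive the corollary as a direct application of Theorem~\ref{tCScontrasts}. Since that theorem already asserts $\Phi$-optimality of the uniform treatment design $\bar{w}=1_v/v$ for every completely symmetric system of contrasts under any orthogonally invariant information function, it suffices to verify that each of the three systems named in the corollary is completely symmetric, i.e., that $QQ^T$ is a positive multiple of $I_v - J_v/v$.

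For each system I would explicitly exhibit $QQ^T$. First, for the maximal system of orthonormal contrasts, the observation preceding the corollary already gives $QQ^T = I_v - J_v/v$, using that $[Q,1_v/\sqrt v]$ is orthogonal. Second, for the centered contrasts $Q = I_v - J_v/v$, the matrix $Q$ is the orthogonal projector onto $\{1_v\}^\perp$, so $QQ^T = Q^2 = Q = I_v - J_v/v$; note that this case is rank-deficient, but Theorem~\ref{tCScontrasts} (as applied via the rank-deficient framework set up in Subsection~\ref{sRankDeficient}, in particular through Theorem~\ref{tRankDefMain}) still delivers the conclusion. Third, for the system of all pairwise comparisons $\tau_i - \tau_j$, the $v \times \binom{v}{2}$ matrix $Q$ has columns $e_i - e_j$ for $i > j$; a direct entrywise computation gives $QQ^T = vI_v - J_v = v(I_v - J_v/v)$, as each treatment appears in $v-1$ pairs with coefficient $\pm 1$ on the diagonal and each off-diagonal entry counts the single pair in which the two indices both appear.

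There is no real obstacle here: once complete symmetry of $QQ^T$ is noted in each case, the conclusion is immediate from Theorem~\ref{tCScontrasts}. The only mild subtlety worth flagging is that the centered-contrast case is rank deficient, so one is invoking the orthogonally invariant criterion on the matrix $C_Q(\bar w) = (Q^T\mathrm{diag}(\bar w^{-1})Q)^+ = v^{-1}Q$ rather than on a conventional information matrix; this reading is precisely what Subsection~\ref{sRankDeficient} was set up to support, and it makes the proof uniform across all three systems.
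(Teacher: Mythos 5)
Your proof is correct and follows exactly the route the paper intends: it verifies that $QQ^T$ is a positive multiple of $I_v - J_v/v$ for each of the three systems (the paper states these same identities in the text immediately preceding the corollary) and then invokes Theorem~\ref{tCScontrasts}, with the rank-deficient centered-contrast case handled through the $C_Q$ framework just as the paper does via the remark at the end of the proof of that theorem.
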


Consider an experiment which aims at comparing two sets of treatments. Exact designs for these contrasts were studied in multiple design settings, e.g. in \cite{Majumdar86} and \cite{Jacroux02} in block experiments, \cite{Jacroux90} studied $A$- and $MV$-optimal designs in model (\ref{eModelWithoutTrend}), \cite{Jacroux93} and \cite{GithinjiJacroux} considered the presence of trends. Without loss of generality, let the first set consist of the first $g$ (control) treatments, $0<g<v/2$,  and the second set be the set of the remaining $v-g$ treatments. Then the aim is to estimate all treatment comparisons $\tau_j - \tau_i$, where $1\leq i \leq g$ and $g+1 \leq j \leq v$, which leads to matrix $Q=(-I_g \otimes 1_{v-g}, 1_{g} \otimes I_{v-g})^T$, where $\otimes$ denotes the Kronecker product. In \cite{Majumdar86} the author suggests that such situation may arise when comparing two 'packages' of treatments or in comparing a set of new treatments with a set of standard (control) treatments. We will call such system of treatment contrasts \emph{comparison of treatments with controls}. This system of contrasts naturally generalizes the standard system for comparison of $v-1$ treatments with one control, $\tau_2-\tau_1$, \ldots, $\tau_v - \tau_1$, where $g=1$.

\begin{theorem}\label{tCwControls}
	Let $p \in [-\infty,0]$. If $p>-\infty$, let $\gamma_p$ be the unique solution of the equation
	\begin{equation}\label{eCwControls}
		(v-g-1)\gamma^{1-p} - (g-1)(1-\gamma)^{1-p} + 2\gamma - 1 = 0
	\end{equation}
	in the interval $(0,1/2]$ and let $\gamma_{-\infty}=1/2$. Then the treatment proportions design that satisfies $w_1 = \ldots = w_g = \gamma_p/g$ and $w_{g+1} =\ldots = w_v = (1-\gamma_p)/(v-g)$ is $\Phi_p$-optimal for comparison of $(v-g)$ treatments with $g$ controls, $0<g<v/2$.
\end{theorem}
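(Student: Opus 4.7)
By Theorem~\ref{tRankDefMain}(ii), it suffices to show that the proposed $w^*$ is $\Phi_p$-optimal for $Q^T\tau$ in the marginal model (\ref{eModelWithoutTrend}). My first move would be a symmetry reduction. Let $G = S_g \times S_{v-g}$ be the subgroup of permutations of $\{1,\ldots,v\}$ preserving the partition into controls and new treatments. Each $\sigma \in G$ acts on $\mathbb{R}^v$ by a permutation matrix $P_\sigma$, and since $\sigma$ permutes the contrasts $\tau_j-\tau_i$ among themselves, one has $P_\sigma Q = QR_\sigma$ for some permutation matrix $R_\sigma$ on the contrast indices. Therefore $Q^T\diag((P_\sigma w)^{-1})Q = R_\sigma^T (Q^T\diag(w^{-1})Q) R_\sigma$ has the same spectrum as $Q^T\diag(w^{-1})Q$, so $\Phi_p(C_Q(\cdot))$ is $G$-invariant. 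Using concavity of $w\mapsto \Phi_p(C_Q(w))$, the $G$-average of any optimal $w$ is again optimal and $G$-invariant. Thus I may assume from the start that $w_1=\ldots=w_g=\gamma/g$ and $w_{g+1}=\ldots=w_v=(1-\gamma)/(v-g)$ for some $\gamma \in (0,1)$.

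Next I would compute the information. For such $w$, a direct calculation yields
\[
Q^T \diag(w^{-1})Q \;=\; \frac{g}{\gamma}(I_g \otimes J_{v-g}) \;+\; \frac{v-g}{1-\gamma}(J_g \otimes I_{v-g}).
\]
Decomposing $\mathbb{R}^{g(v-g)} \cong \mathbb{R}^g \otimes \mathbb{R}^{v-g}$ into the four joint eigenspaces of $J_g$ and $J_{v-g}$ identifies the positive eigenvalues of $Q^T\diag(w^{-1})Q$ as $\lambda_1 = g(v-g)/[\gamma(1-\gamma)]$ (mult.\ $1$), $\lambda_2 = g(v-g)/(1-\gamma)$ (mult.\ $v-g-1$), and $\lambda_3 = g(v-g)/\gamma$ (mult.\ $g-1$), totalling $v-1 = \mathrm{rank}(Q)$. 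The non-zero eigenvalues of $C_Q(w)$ are the reciprocals $\mu_i = \lambda_i^{-1}$.

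The remainder is a one-dimensional optimization in $\gamma$. For $p = -\infty$, the identity $\mu_1 = \gamma\mu_2 = (1-\gamma)\mu_3$ gives $\mu_1 \le \min(\mu_2,\mu_3)$, so the minimum eigenvalue equals $\mu_1 = \gamma(1-\gamma)/[g(v-g)]$, which attains its maximum at $\gamma = 1/2 = \gamma_{-\infty}$. For $p \in (-\infty,0]$, I would extremize $\psi_p(\gamma) = \mu_1^p + (v-g-1)\mu_2^p + (g-1)\mu_3^p$ (or its logarithmic analogue at $p=0$); after setting $\psi_p'(\gamma)=0$ and multiplying through by $\gamma^{1-p}(1-\gamma)^{1-p}$, elementary algebra reduces the first-order condition to exactly (\ref{eCwControls}). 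Writing that equation's LHS as $F_p(\gamma)$, I check that $F_p'(\gamma) = (1-p)\bigl[(v-g-1)\gamma^{-p} + (g-1)(1-\gamma)^{-p}\bigr] + 2 > 0$, so $F_p$ is strictly increasing on $(0,1)$; combined with $F_p(0^+) = -g < 0$ and $F_p(1/2) = (v-2g)(1/2)^{1-p} > 0$ (since $g<v/2$), this yields a unique root $\gamma_p \in (0,1/2)$ by the intermediate value theorem.

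The principal obstacle is the symmetry reduction, since one must invoke concavity of $w\mapsto\Phi_p(C_Q(w))$ in the rank-deficient setting and verify that the $G$-average of an optimizer is itself a feasible optimizer. Once the restriction to the one-parameter family parametrized by $\gamma$ is justified, the tensor-product eigenvalue computation and the first-order condition are routine.
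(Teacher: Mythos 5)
Your proposal is correct and follows essentially the same route as the paper's proof: symmetrize over $S_g\times S_{v-g}$ (the paper averages over the block permutations $\tilde P=\diag(P_1,P_2)$) to reduce to the one-parameter family $w_\gamma$, identify the three distinct positive eigenvalues with multiplicities $1$, $v-g-1$, $g-1$, and solve the one-dimensional first-order condition, which does reduce to (\ref{eCwControls}); your spectrum agrees exactly with the paper's, the only cosmetic difference being that the paper extracts it from the $v\times v$ matrix $M^{-1/2}(w_\gamma)QQ^TM^{-1/2}(w_\gamma)$ via the $X^TX$ versus $XX^T$ trick rather than your direct Kronecker decomposition of $Q^T\diag(w^{-1})Q$. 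One small quibble: the theorem is already a statement about the marginal model (\ref{eModelWithoutTrend}), so the opening appeal to Theorem~\ref{tRankDefMain}(ii) is unnecessary (and transferring the optimal weights back to model (\ref{eModel1}) would in any case use part (i), not part (ii)).
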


We note that for any $p \in (-\infty,0]$ and $v > 2$ there exists a unique solution $\gamma_p$ of the equation (\ref{eCwControls}) in the interval $(0,1/2)$, which is moreover numerically easy to calculate, because the function $F(\gamma) = (v-g-1)\gamma^{1-p} - (g-1)(1-\gamma)^{1-p} + 2\gamma - 1$ is an increasing convex function for $\gamma \in (0,1/2)$ that satisfies $F(0)\leq 0$ and $F(1/2)\geq 0$.

The obtained optimal treatment proportions which depend on the choice of criterion $\Phi_p$ generalize the results obtained for block designs by \cite{GiovagnoliWynn} on comparison with (one) control. The optimal proportions given by Theorem \ref{tCwControls} are characterized by a single value, $\gamma_p$, the total weight of the first $g$ treatments. In particular, for $D$-optimality, $\gamma_0 =g/v$ and the optimal proportions are uniform; for $A$-optimality, $\gamma_{-1} = \frac{\sqrt{g(v-g)}-g}{v-2g}$ which lies in $(g/v,1/2)$; and for $E$-optimality, $\gamma_{-\infty}=1/2$, i.e., to each of the two sets of treatments, half of the total weight is allocated.
\bigskip

For comparison with controls, it is common to also use the criterion of $MV$-optimality which minimizes the maximum variance of the contrasts of interest. It turns out that $MV$-optimal and $A$-optimal treatment proportions are the same. It
follows that the $A$- and $MV$-optimal nuisance resistant designs are the same.

\begin{theorem}\label{tMVopt}
	Let $\gamma = \frac{\sqrt{g(v-g)}-g}{v-2g}$ and
	let $w_1 = \ldots = w_g =\gamma/g$ and $w_{g+1} = \ldots = w_v = (1-\gamma)/(v-g)$. Then $w$ is $MV$-optimal for comparison of $(v-g)$ treatments with $g$ controls, $0<g<v/2$.
\end{theorem}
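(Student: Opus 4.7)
The plan is to deduce the $MV$-optimality of $w$ from the $A$-optimality established in Theorem \ref{tCwControls} via a short averaging argument, exploiting the fact that the candidate design equalizes every pairwise variance.

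First I would recast both criteria in terms of the pairwise variances. In model (\ref{eModelWithoutTrend}) the variance of the estimator of $\tau_j-\tau_i$ under a treatment proportions design $w>0$ is proportional to $w_i^{-1}+w_j^{-1}$, so $MV$-optimality amounts to minimizing
\begin{equation*}
V_{\max}(w) := \max_{1\le i \le g,\, g+1 \le j \le v}\bigl(w_i^{-1}+w_j^{-1}\bigr),
\end{equation*}
while $A$-optimality minimizes the sum of exactly the same quantities, namely $S(w) := \mathrm{tr}\bigl(Q^T\diag(w^{-1})Q\bigr) = \sum_{i\le g,\, j>g}\bigl(w_i^{-1}+w_j^{-1}\bigr)$.

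Next I would identify the $\gamma$ in the statement with $\gamma_{-1}$ from Theorem \ref{tCwControls}. Substituting $p=-1$ into (\ref{eCwControls}) and simplifying reduces that equation to the quadratic $(v-2g)\gamma^2 + 2g\gamma - g = 0$, whose unique root in $(0,1/2)$ is $(\sqrt{g(v-g)}-g)/(v-2g)$. Hence, by Theorem \ref{tCwControls}, the proposed $w$ is $A$-optimal; equivalently, $S(w) \le S(w')$ for every competing treatment proportions design $w'$.

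The heart of the argument is then a one-line averaging observation. Because $w_i = \gamma/g$ for $i\le g$ and $w_j = (1-\gamma)/(v-g)$ for $j>g$, every admissible pair $(i,j)$ produces the same value $w_i^{-1}+w_j^{-1} = g/\gamma + (v-g)/(1-\gamma)$, so $V_{\max}(w) = S(w)/[g(v-g)]$. For any other treatment design $w'$, the maximum dominates the mean, hence
\begin{equation*}
V_{\max}(w') \;\ge\; \frac{S(w')}{g(v-g)} \;\ge\; \frac{S(w)}{g(v-g)} \;=\; V_{\max}(w),
\end{equation*}
where the second inequality is the $A$-optimality of $w$. This yields $MV$-optimality of $w$.

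I do not anticipate a genuine obstacle here: once one notices that the $A$-optimal design balances every pairwise variance, $MV$-optimality is forced by the elementary fact that a maximum is at least an average, combined with Theorem \ref{tCwControls}. The only routine verification is confirming algebraically that the stated $\gamma$ equals $\gamma_{-1}$.
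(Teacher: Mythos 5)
Your proof is correct, but it takes a genuinely different route from the paper's. The paper re-runs the symmetrization machinery of Theorem \ref{tCwControls} for the $MV$ criterion: it notes that $\Phi_{MV}$ is invariant under within-group permutations, reduces the search to the two-parameter family $w_\gamma$, computes $V(w_\gamma)=\gamma_1^{-1}I_g\otimes J_{v-g}+\gamma_2^{-1}J_g\otimes I_{v-g}$ so that every diagonal entry equals $\gamma_1^{-1}+\gamma_2^{-1}$, and then minimizes $f_{MV}(\gamma_1)=\gamma_1^{-1}+(v-g)/(1-g\gamma_1)$ directly. You instead take the $A$-optimality result ($p=-1$ in Theorem \ref{tCwControls}) as a black box, verify that (\ref{eCwControls}) with $p=-1$ reduces to $(v-2g)\gamma^2+2g\gamma-g=0$ with unique root $(\sqrt{g(v-g)}-g)/(v-2g)$ in $(0,1/2)$, and then invoke the ``maximum dominates the mean'' inequality, which closes the argument precisely because the candidate design equalizes all $g(v-g)$ pairwise variances $w_i^{-1}+w_j^{-1}$. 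The two identifications you rely on are both sound: the variances of the contrasts of interest are exactly the diagonal entries of $V(w)=Q^T\diag(w^{-1})Q$, and $A$-optimality here is equivalent to minimizing $\mathrm{tr}\bigl(Q^T\diag(w^{-1})Q\bigr)=\sum_{i\le g,\,j>g}(w_i^{-1}+w_j^{-1})$ even in the rank-deficient case $g>1$, since $\mathrm{rank}\bigl(Q^T\diag(w^{-1})Q\bigr)=v-1$ for every $w>0$ and the nonzero eigenvalues of $C_Q(w)$ are the reciprocals of the nonzero eigenvalues of $V(w)$, so their reciprocal sum is the full trace. Your argument is shorter and makes transparent \emph{why} the $A$- and $MV$-optimal proportions coincide (an $A$-optimal design that equalizes all variances of interest is automatically $MV$-optimal); the paper's derivation does not presuppose that coincidence and would still deliver the $MV$-optimal $\gamma$ even if it had differed from $\gamma_{-1}$.
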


Once optimal treatment proportions are calculated, optimal nuisance resistant (balanced, product) designs can be constructed, retaining the same information matrix as their treatment proportion designs.
Note that the matrix $Q$ for completely symmetric contrasts and for comparison with controls has rank $v-1$, therefore for such systems, the balanced designs and nuisance resistant designs coincide.

\section{Examples}\label{sExamples}

\subsection{Trend Resistant Designs}

Let us consider a model where we perform the trials in a time sequence, in each time exactly one trial, and the nuisance effect is the effect of some time trend
\begin{equation}
	\label{eTimeTrendModel}
	Y_i = \tau_{u(i)} + h_1(t(i))\theta_1 + \ldots + h_d(t(i))\theta_{d} + \varepsilon_i, i \in \{1,\ldots,n\},
\end{equation}
where $u(i) \in \{1,\ldots,v\}$ represents the chosen treatment and $t(i) \in \{1,\ldots,n\}$ denotes in which time the treatment is to be applied in trial $i$. The functions $h_1, \ldots, h_d: \mathbb{R}\rightarrow\mathbb{R}$ are the regressors of the time trend, often chosen to be polynomials of degrees $0, \ldots, d-1$ respectively.

The interest in designs that perform well under model (\ref{eTimeTrendModel}) dates back to the mid-20th century, e.g., in paper \cite{Cox}. The research focus is usually on combinatorial construction of exact designs \emph{orthogonal} to time trend (or \emph{trend free}). These are designs that satisfy that no information is lost due to the time trend (see, e.g., \cite{JacrouxMajumdar}, \cite{Bailey}). Usually, the focus is on all parameters of interest, not on a system of contrasts $Q$, resulting in the condition that a design is trend free with respect to $h_k$ in model (\ref{eTimeTrendModel}) if $\sum_t \xi(u,t) h_k(t) = 0$, see, e.g. \cite{Cox}. Such trend free designs satisfy $M_{12}(\xi)=0$ and thus $M_\tau(\xi) = M_{11}(\xi)$.

The drawback of the combinatorial approach is that it is usually tailored for a very specific model. For example, the theoretical results on orthogonal designs require the number of design points to be a multiple of the number of treatments, the time points to be evenly spaced and the time trend needs to be represented by a polynomial. However, these conditions often do not hold.
The reader may find a survey of the literature on the trend resistant experimental designs in the papers \cite{Cheng} or \cite{AtkinsonDonev}.

Note that the orthogonal designs satisfy (\ref{eBalanceCondGeneral2}) and thus they are balanced. However, since we aim at estimating a set of treatment contrasts $Q$, the stringent conditions of orthogonality need not hold for the information to be retained. If $\xi$ is resistant to nuisance effects, the equality $M_\tau(\xi) = M_{11}(\xi)$ in general does not hold, but such $\xi$ satisfies $N_K(\xi) = N_Q(w)$, i.e., the designs resistant to nuisance effects eliminate the effects of the time trend. We remark that when $\sum_t h(t) = 0$, the conditions of orthogonality and the conditions of balancedness coincide.
\bigskip

We will examine the model with trigonometric time trend of degree $D \in \mathbb{N}$, which can be used to model, for instance, circadian rhythms (cf. \cite{Kitsos}). For simplicity, let $\phi_n=2\pi/n$ and consider the model
\begin{align}\label{eTrigonometricModel}
	Y_t=\tau_{u(t)}&+\theta_0+\theta_1\cos(\phi_nt)+\theta_2\sin(\phi_nt)+\ldots  \\
	&+\theta_{2D-1}\cos(D\phi_nt)+\theta_{2D}\sin(D\phi_nt)+\varepsilon_t, \nonumber
\end{align}
where $t = 1, 2, \ldots, n$.

An exact design $\xi$ will be represented by a sequence of treatments determining which treatments are to be chosen in which times. Note that the regression functions satisfy $\sum_t h_k(t)=0$ for $k>0$, i.e., the notions of orthogonal and balanced designs for this model coincide.

Using Theorem \ref{tBalanceOpt}, we get that by repeating a sequence of treatments with $\Phi$-optimal treatment weights, we may obtain a $\Phi$-optimal design for model (\ref{eTrigonometricModel}) of high degree.

\begin{proposition}\label{pTrig}
	Let $\Phi$ be an information function. Let $l \in \mathbb{N}$ and let $\xi_p$ be an exact design of size $l$ with $\Phi$-optimal treatment proportions for estimating contrasts $Q^T\tau$. Let $m \in \mathbb{N}$. Then, the exact design $\xi=\xi_p\xi_p...\xi_p$ of size $n=lm$ formed by an $m$-fold replication of $\xi_p$ is $\Phi$-optimal for all trigonometric models \eqref{eTrigonometricModel} of degrees $D<m$.
\end{proposition}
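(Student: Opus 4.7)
My plan is to apply Theorem \ref{tBalanceOpt}, which asserts that any nuisance resistant design whose treatment proportions design is $\Phi$-optimal is itself $\Phi$-optimal. I therefore need to verify two properties of the replicated design $\xi$: that its treatment proportions are $\Phi$-optimal, and that $\xi$ is nuisance resistant.

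The first requirement is immediate: each treatment appears in $\xi$ exactly $m$ times as often as in $\xi_p$, so the treatment proportions design of $\xi$ coincides with that of $\xi_p$, which is $\Phi$-optimal by hypothesis. For the second requirement, I will verify the stronger property of balancedness, which implies nuisance resistance for any $Q$. Since $\xi$ places one trial at each of the $n=lm$ time points, its nuisance conditions design is $\alpha=1_n/n$, so Proposition \ref{pBalanceCond} applies. Combined with the identity $\frac{1}{n}\sum_{t=1}^n h_k(t)=0$ for $k\in\{1,\ldots,2D\}$, this reduces balancedness to the orthogonality identities
\begin{equation*}
\sum_{t=1}^n \xi(u,t)\,h_k(t) = 0, \quad u\in\{1,\ldots,v\},\; k\in\{1,\ldots,2D\}.
\end{equation*}

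The heart of the argument is to check these identities, and this is the only point where the assumption $D<m$ enters. Let $J_u\subseteq\{1,\ldots,l\}$ denote the set of positions of treatment $u$ within $\xi_p$; by construction of $\xi$, treatment $u$ is applied in $\xi$ precisely at the times $\{t+rl : t\in J_u,\; r=0,\ldots,m-1\}$. Grouping the cosine and sine regressors at frequency $c$ as real and imaginary parts of $e^{\i c\phi_n t}$ and using $\phi_n l = 2\pi/m$, the inner sum over the $m$ replicates factors as
\begin{equation*}
\sum_{r=0}^{m-1} e^{\i c\phi_n(t+rl)} = e^{\i c\phi_n t}\sum_{r=0}^{m-1} e^{2\pi\i cr/m}.
\end{equation*}
For $c\in\{1,\ldots,D\}$ and $D<m$, the integer $c$ is not a multiple of $m$, so the geometric sum of $m$-th roots of unity vanishes. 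Summing over $t\in J_u$ and taking real and imaginary parts yields the required orthogonality, and Theorem \ref{tBalanceOpt} then delivers the $\Phi$-optimality of $\xi$. The essentially only piece of content is this roots-of-unity cancellation: $D<m$ is precisely the non-resonance condition that prevents any of the frequencies $c$ from aliasing under the $m$-fold repetition.
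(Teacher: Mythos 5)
Your proof is correct and follows essentially the same route as the paper: the paper's Lemma \ref{lTrigBalance} is exactly your roots-of-unity computation (factoring $\sum_{r}e^{\i a\phi_n(t+rl)}$ into a geometric sum of $m$-th roots of unity that vanishes when $a$ is not a multiple of $m$), after which Theorem \ref{tBalanceOpt} concludes. The only cosmetic difference is that you phrase balancedness via Proposition \ref{pBalanceCond} and the identity $\sum_t h_k(t)=0$ (and should, strictly, also note that the constant regressor $h_0\equiv 1$ trivially satisfies the balance condition), whereas the paper verifies the zero sums directly.
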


It is in fact possible to show that the design $\xi$ from Proposition \ref{pTrig} is $\Phi$-optimal for models of the type \eqref{eTrigonometricModel} of any degree, but they cannot include the terms $\cos(a\phi_nt)$ and $\sin(a\phi_nt)$, where $a$ is an integer multiple of $m$. 

We demonstrate the results given by Proposition \ref{pTrig} on a simple example.

\begin{example}
	Consider the model 
	$$Y_{t} = \tau_t + \theta_0 + \theta_1 \sin(r) + \theta_2 \cos(r) + \theta_3 \sin(2r) + \theta_4 \cos(2r) + \theta_5 \sin(3r) + \theta_6 \cos(3r) + \varepsilon_{t},$$
	where $r=\frac{2\pi}{n}t$ and $t = 1, 2, \ldots, n$. Let $n=16$, $v=3$ and $\xi_p = 2113$. Then, the design $\xi_1^* = 2113,2113,2113,2113$ is $E$-optimal for comparison with one control. Let $n=12$, $v=3$ and $\xi_q=321$. Then, the design $\xi_2^* = 321,321,321,321$ is $D$-optimal for comparison with one control. Moreover, let $\Phi$ be an orthogonally invariant information function. Then, $\xi_2^*$ is $\Phi$-optimal for estimating any completely symmetric system of contrasts. \qed
\end{example}

\subsection{Block Designs, Row-Column Designs}

Consider an experiment, where the treatment units are arranged in $b$ blocks. As usual, for each of the $N$ treatment units, we choose one of $v$ treatments. The response is then determined by the treatment effects and block effects. We assume that the treatment and block effects do not interact, i.e, we obtain an additive blocking experiment
\begin{equation}
	\label{eBlockModel}
	Y_i = \tau_{u(i)} + \eta_{t(i)} + \varepsilon_i,\quad i = 1,\ldots,N,
\end{equation}
where $u(i) \in \{1,\ldots,v\}$ and $t(i) \in \{1,\ldots,b\}$. The designs of blocking experiments are called \emph{block designs}. There is a large amount of literature on this topic, in particular the papers that consider treatment contrasts in block designs are, e.g., \cite{MajumdarNotz}, \cite{Pukelsheim83}.

Note that model (\ref{eBlockModel}) may be expressed as a special case of model (\ref{eModel1}), where $\T=\{1,\ldots,b\}$, $n=b$, $\theta = (\eta_1, \ldots, \eta_b)^T$ and $h(t)=e_t \in \mathbb{R}^{b}$ is the $t-$th elementary unit vector.
For block designs, in the balance conditions (\ref{eBalanceCondGeneral2}) we obtain $\xi(1,t)/w_1 = \ldots = \xi(v,t)/w_v$ for all $t \in \{1,\ldots,b \}$, which leads to a product design $\xi = w \otimes \alpha$. That is, all balanced designs in model (\ref{eBlockModel}) are product designs. Therefore, for a system of contrasts of rank $v-1$ and a strictly concave information function $\Phi$, from Theorem \ref{tSuffAndNecc} it follows that all $\Phi$-optimal designs are product designs. Note that, in general, the balanced incomplete block designs and the balanced treatment incomplete block designs (see, e.g., \cite{MajumdarNotz}) are not balanced in the sense of conditions \ref{eBalanceCondGeneral2}.
\bigskip

Block designs are often used for eliminating heterogeneity in one direction, e.g., caused by a nuisance time trend. If the position of a unit within a block affects the response as well, or in general, the heterogeneity needs to be eliminated in two directions, we may use the \emph{row-column designs} (see \cite{Jacroux}). Here, $N$ experimental units are arranged in $b_1$ rows and $b_2$ columns. The mean response is determined by the sum of the treatment, row and column effect, modelled as
\begin{equation}
	\label{eRowColumnModel}
	Y_i = \tau_{u(i)} + \eta_{k(i)} + \varphi_{l(i)} + \varepsilon_i,\, i = 1,\ldots,N,
\end{equation}
where $u(i) \in \{1,\ldots,v\}$, $k(i) \in \{1,\ldots,b_1\}$ and $l(i) \in \{1,\ldots,b_2\}$ represent the row and column chosen for the $i$-th trial, respectively, and $\eta_{k(i)}, \varphi_{l(i)}$ are the row and column effects.

This model can also be expressed as a special case of model (\ref{eModel1}), where $\T=\{1,\ldots,b_1\}\times \{1,\ldots,b_2\}$, $n=b_1b_2$, $\theta = (\eta_1, \ldots, \eta_{b_1}, \phi_1, \ldots, \phi_{b_2})^T$ and $h(k,l)=(e^T_k, e^T_l)^T \in \mathbb{R}^{b_1 + b_2}$. The balance conditions for the row-column model become $w_1^{-1}\sum_l \xi(1,k,l) = \ldots = w_v^{-1}\sum_l \xi(v,k,l)$ for all $k=1,\ldots, b_1$ and $w_1^{-1}\sum_k \xi(1,k,l) = \ldots = w_v^{-1}\sum_k \xi(v,k,l)$ for all $l=1,\ldots, b_2$. That is, for any row (column) the ratio of the total weights of any two treatments $i$, $j$ in the particular row (column) is given by the ratio of the treatment weights $w_i/w_j$. In other words, for the design $\xi$ to be balanced (and hence optimal, if $\xi$ attains optimal treatment weights), the functions $\xi(u,\cdot,\cdot)/w_u$ need to have the same row and column marginals for all $u=1,\ldots, v$.

The block and row-column designs are called the designs for the one-way and two-way elimination of heterogeneity, respectively (see \cite{Jacroux}). By combining the models (\ref{eBlockModel}) and (\ref{eTimeTrendModel}), the blocking experiment under the presence of a nuisance time trend is obtained, see, e.g., \cite{BradleyYeh} or \cite{JacrouxMajumdar}, which we will examine further in Example \ref{exBlockTrend}.

\section{Constructing Efficient Exact Designs}\label{sConstructing}

By constructing product designs with optimal treatment weights, and calculating their criterial values (or by analytically deriving optimal criterial values), we may assess the quality of the exact designs. More precisely, we can compute lower bounds on the efficiency of any given exact design by calculating its approximate efficiency with respect to the criterion $\Phi$, $\mathrm{eff}(\xi)=\frac{\Phi(\xi)}{\Phi(\xi^*)}$, where $\xi^*$ is a $\Phi$-optimal approximate design.
Moreover, as we demonstrate in this section, the balance conditions provide a tool for obtaining optimal approximate designs with small support and these designs can be used to construct efficient exact designs.

We will focus on exact designs of experiments in which exactly one trial is to be performed under each nuisance condition. The problem of finding such optimal designs is in general a difficult discrete optimization problem, see, e.g., \cite{AtkinsonDonev} or \cite{HarmanSagnol}.
\bigskip

Note that both the balance conditions (and, in general, the conditions of resistance to nuisance effects) and the conditions on $\Phi$-optimal weights are linear. Hence, results provided in the previous sections can be used to calculate a balanced approximate design with $\Phi$-optimal weights employing linear programming, solving the problem
\begin{equation}\label{eLP}
	\min \{c^T x | Ax = b, x \geq 0 \},
\end{equation}
where $x \in \mathbb{R}^{vn}$ represents a design $\xi$ in the vector form, $A$ consists of sufficient conditions of optimality and we are free to choose the the vector c of the coefficients of the objective function. Let us denote the set of all feasible solutions of (\ref{eLP}) as $\mathcal{P}$. 

The matrix $A$ consists of 
\begin{enumerate}
	\item[(i)] $v$ equalities $\sum_t \xi(u,t) = w_u$, $u=1,\ldots,v$, i.e., $\xi$ attains the $\Phi$-optimal treatment weights,
	\item[(ii)] $d(v-1)$ equalities $w_1^{-1} \sum_t \xi(1,t) h(t) = w_u^{-1} \sum_t \xi(u,t) h(t)$, $u=2,\ldots, v$, i.e., $\xi$ is a balanced design,
	\item[(iii)] $n$ equalities $\sum_u \xi(u,t)=1/n$, i.e., under each nuisance condition exactly one trial is performed.
\end{enumerate}

Once the $\Phi$-optimal treatment weights $w^*$ are obtained, a $\Phi$-optimal design can be constructed as a product $w^* \otimes \alpha$ for any nuisance conditions design $\alpha$. However, in general, it is difficult to construct exact designs from the product designs, due to their regular structure and large support.
To obtain an optimal design with small support, it is beneficial to employ the simplex method of linear programming, whose output is an optimal design $\xi^*$ that represents a vertex in $\mathcal{P}$, the set of feasible solutions of (\ref{eLP}).

\begin{proposition}\label{pSimplex}
	Let $\xi$ represent a vertex in $\mathcal{P}$. Then, $\xi$ contains at most $v+(v-1)k +n-1$ support points, where $k$ is the affine dimension of the set $\{h(t)\}_{t \in \T}$.
\end{proposition}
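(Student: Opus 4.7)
The plan is to invoke the standard linear-programming fact that every vertex of a polyhedron $\{x : Ax = b, x \geq 0\}$ has at most $\mathrm{rank}(A)$ positive components (a basic feasible solution, being characterized by linear independence of the columns of $A$ indexed by its support, has support size at most $\mathrm{rank}(A)$). Thus it suffices to show that the constraint matrix $A$ of (\ref{eLP}), which has $v + d(v-1) + n$ rows and $vn$ columns corresponding to the blocks (i), (ii), (iii), satisfies $\mathrm{rank}(A) \leq v + (v-1)k + n - 1$. In other words, I need to exhibit $1 + (d-k)(v-1)$ independent linear dependences among the rows.

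The first dependence is between (i) and (iii): summing the $v$ equations in (i) yields $\sum_{u,t}\xi(u,t) = \sum_u w_u = 1$, and summing the $n$ equations in (iii) yields $\sum_{u,t}\xi(u,t) = n \cdot (1/n) = 1$. Since the two sums coincide as linear functionals and as right-hand sides, one of the rows in (i)$\cup$(iii) is redundant. For the remaining dependences, I would exploit the affine dimension $k$ as follows. Fix any $t_0 \in \T$, set $H(t) := h(t) - h(t_0)$, and observe that by hypothesis $V := \mathrm{span}\{H(t) : t \in \T\}$ has dimension $k$. Plugging $h(t) = h(t_0) + H(t)$ into the balance condition $w_1^{-1}\sum_t \xi(1,t)h(t) = w_u^{-1}\sum_t \xi(u,t)h(t)$ and using (i) to cancel the $h(t_0)$ terms, the block (ii) becomes equivalent, modulo the block (i), to $w_1^{-1}\sum_t \xi(1,t)H(t) = w_u^{-1}\sum_t \xi(u,t)H(t)$ for $u = 2,\ldots,v$. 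Composing with any linear functional vanishing on $V$ kills the vector-valued equation, so out of the $d$ scalar equations for each index $u$, only $k$ are linearly independent modulo (i), yielding $(d-k)(v-1)$ additional redundancies.

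Combining, $\mathrm{rank}(A) \leq v + d(v-1) + n - 1 - (d-k)(v-1) = v + (v-1)k + n - 1$, and the LP vertex fact then gives the claimed support bound. The main technical point is the passage from ``redundancy modulo (i)'' to genuine linear dependences among the rows of $A$; I would make this rigorous by explicitly writing, for each of the $(d-k)$ linear functionals in a basis of $V^\perp$ and each $u \in \{2, \ldots, v\}$, a corresponding row combination in block (ii) which, after subtracting suitable multiples of the rows of block (i), becomes the zero row. Counting nonzero rows after these explicit row operations then yields the rank bound directly.
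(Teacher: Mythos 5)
Your proposal is correct and follows essentially the same route as the paper's proof: both reduce to bounding $\mathrm{rank}(A)$ via the basic-feasible-solution characterization of vertices, identify the single dependence linking blocks (i) and (iii), and show that block (ii) contributes only $k(v-1)$ independent rows modulo block (i). The only cosmetic difference is that you phrase the latter step via functionals annihilating $\mathrm{span}\{h(t)-h(t_0)\}$, whereas the paper chooses coordinates so that the last $d-k$ components of $h(\cdot)-h(1)$ are affine combinations of the first $k$; these are the same argument.
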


From Proposition \ref{pSimplex} it follows that by employing the simplex method, we can obtain a $\Phi$-optimal design $\xi^*$ with at most  $(v-1)(k+1)+n$ support points.
As a special case, when a constant term $\theta_0$ is present in the time trend, it may be ignored in the conditions in (\ref{eLP}), because it does not increase the affine dimension of $\{h(t)\}_{t \in \T}$; reducing thus the upper bound on the number of support points by $v-1$.

Suppose that $\xi$ satisfying (iii) has support of size $n$, the number of nuisance conditions. Then, $\xi$ uniquely determines an exact design of size $n$. The number of support points in designs obtained by the simplex method is only slightly larger than $n$; it exceeds this minimum support size by $(v-1)(k+1)$. Note that the number of exceeding support points does not depend on $n$, thus, even for increasing number of nuisance conditions, it remains small.

We note that using the Carathéodory Theorem (cf. Theorem 8.2. in \cite{puk}), it is possible to obtain results similar to Proposition \ref{pSimplex}, but the Carathéodory Theorem does not provide an actual method of constructing a design with small support, unlike the simplex method.
\bigskip

From an optimal approximate design with small support, an efficient exact design can be constructed by rounding, or often even by a complete enumeration of treatments in a small number of nuisance conditions.

\begin{example}\label{exTrendLinprog}
	Consider an experiment of performing trials in a time sequence
	$$ Y_i = \tau_{u(i)} + \theta_0 + \theta_1h_1(t(i)) + \varepsilon_i,\quad i=1,\ldots,n, $$
	where $h_1(t)=e^t/\sum_j e^j$ represents an exponential time trend (e.g., the decay of wool in the experiment of wool processing, as suggested by \cite{AtkinsonDonev}).
	Assume that $v=5$ and the objective is to find an $A$-optimal design for comparing 3 treatments with 2 controls, i.e., $\gamma_{-1} = \sqrt{6}-2 \approx 0.45$. We will provide optimal balanced designs with small support by employing the simplex method of linear programming (\emph{linprog} function of Matlab, using the simplex algorithm). Note that also the interior point (default) algorithm of Matlab's \emph{linprog} tends to provide optimal vertex solutions or optimal solutions with small support; as such it can be used instead of the simplex algorithm.
	
	First, let $n=8$. Since $\theta_0$ is the constant term, from Proposition \ref{pSimplex} it follows that there are at most $n-1 + v + (v-1)D = 16$ linearly independent rows of $A$ in (\ref{eLP}), where $D=d-1=1$.
	We remark that although we are free to choose the vector $c$ in the linear program, the support size of the design obtained by the simplex method does not seem to depend on the choice of $c$. Therefore, we chose each of the elements of $c$ uniformly randomly from $(0,1)$.
	
	We obtained a design $\xi_1$ that has the support of size $16$ (and the minimum support size is 8) and is ``fixed'' in 4 times (i.e., in each of these times $\xi_1$ has only one non-zero element), see Table \ref{tblLPdesign}. The support size corresponds to the bound $16$ given by \ref{pSimplex}.
	
	\begin{table}[h]
	\begin{center}
	\begin{tabular}{  c | c c c c c c c c }
	\hline\noalign{\smallskip}
	$u\backslash t$ & 1 & 2 & 3 & 4 & 5 & 6 & 7 & 8 \\
	\noalign{\smallskip}\hline\noalign{\smallskip}
	1 &         0 &   0.1250  &  0.0560     &    0    &     0     &    0   &      0  &  0.0437 \\
	2 & 0   &      0  &  0.0690   &      0   &      0 &   0.1250 &   0.0059 &   0.0249 \\
	3 & 0.0245     &    0      &   0  &  0.1250   &      0     &    0   &      0 &   0.0340 \\
	4 & 0.0154     &    0     &    0   &      0   & 0.1250     &    0  &  0.0207  &  0.0224 \\
	5 & 0.0851     &    0     &    0    &     0   &      0     &    0  &  0.0984  &       0 \\
	\noalign{\smallskip}\hline
	\end{tabular}
	\caption{$A$-optimal balanced approximate design obtained by the simplex method. The first two treatments are controls.}
	\label{tblLPdesign}
	\end{center}
	\end{table}
	
	By a complete enumeration of the possible treatment combinations in the remaining 4 non-fixed times, we chose the design $\hat{\xi}_1=  51234215$ that maximizes the criterial value. 
	For $v=5$ and $n=8$ it is possible to find the $A$-optimal exact design by a complete enumeration, $\xi^*=41253214$. It turns out that the design $\hat{\xi}_1$ has efficiency 1 relative to $\xi^*$, i.e., it is optimal; in fact, $\hat{\xi}_1$ can be obtained by relabelling treatments $3,4,5$ in $\xi^*$. Note that, in general, the proposed heuristic does not provide optimal exact designs.
	
	For $n=100$ and the same model assumptions, we obtained a design $\xi_2$ with support of size $108$. That is, the number of support points of $\xi_2$ exceeds the minimum support size again by 8; moreover $\xi_2$ has only 5 non-fixed times. 
	Therefore, even for $n=100$, an efficient exact design may be constructed by a complete enumeration of treatments in the non-fixed times. The resulting design $\hat{\xi}_2$ assigns 23, 22, 19, 18, 18 trials to treatments $1, \ldots, 5$, respectively, which corresponds to the $A$-optimal treatment weights given by $\gamma_{-1}\approx0.45$. Moreover,  $\hat{\xi}_2$ has approximate efficiency 0.994; its efficiency relative to the optimal exact design would be even higher, but for the problem of this size, it is infeasible to compute an optimal exact design by a complete enumeration.
	\qed
	
\end{example}

In the following example, we demonstrate for various values of $v$, $n$, $d$ that the simplex method provides optimal approximate designs with small support.

\begin{example}\label{exTrendLPmany}
	Consider an experiment of performing trials in a time sequence which aims at comparing treatments with control and the observed values are a subject to a polynomial time trend
	$$ Y_i = \tau_{u(i)} + \theta_0 p_0(t(i)) + \theta_1p_1(t(i)) + \ldots + \theta_D p_D(t(i)) + \varepsilon_i,\quad i=1,\ldots,n, $$
	where $p_0, \ldots, p_D$ are discrete orthogonal polynomials of degrees $0, \ldots, D$, respectively, i.e., $\sum_t p_i(t) p_j(t) = 0$ for $i \neq j$. Furthermore, we set $p_0 \equiv 1$ and $p_i(1) = 1$ for all $i$. Note that although the total number of time trend parameters is $d=D+1$, the term $\theta_0 p_0(t) = \theta_0$ represents the constant term and thus, from Proposition \ref{pSimplex} it follows that there are at most $n+(v-1)(D+1)$ linearly independent rows of $A$ in (\ref{eLP}).
	
	For varying $v$, $n$ and $D$, we calculated an $A$-optimal design for comparison of treatments with one control using the simplex method and we compared the size of its support with the minimum size of the support and with the theoretically derived bounds given by Proposition \ref{pSimplex} (see Table \ref{tblLPdesigns}).
	
	\begin{table}[h]
		\begin{center}
		\begin{tabular}{  c c c | c c }
			\hline\noalign{\smallskip}
			$v$ & $n$ & $D$ & Simplex & Max. Simplex \\
			\noalign{\smallskip}\hline\noalign{\smallskip}
			3 & 120 & 1 & 124 (4) & 124 \\
			3 & 150 & 1 & 154 (4) & 154 \\
			3 & 200 & 1 & 204 (4) & 204 \\
			4 & 120 & 1 & 126 (6) & 126 \\
			5 & 120 & 1 & 128 (8) & 128 \\
			8 & 120 & 1 & 134 (14) & 134 \\
			3 & 120 & 2 & 126 (6) & 126 \\
			3 & 120 & 3 & 128 (8) & 128 \\
			3 & 120 & 4 & 130 (10) & 130 \\
			3 & 120 & 5 & 132 (12) & 132 \\
			\noalign{\smallskip}\hline
		\end{tabular}
		\caption{The size of the support. For a given number of treatments $v$, number of times $n$ and degree of the time trend $D$, the column \emph{Simplex} contains the size of the support of the design calculated using the simplex method (and the number of support points over the minimum size of the support, $n$, in parentheses); the column \emph{Max. Simplex} contains the theoretical bound on the maximum number of support points given by Proposition \ref{pSimplex}. Note that for each of the studied cases the theoretical bound on the support has been exactly achieved.}
		\label{tblLPdesigns}
		\end{center}
	\end{table}
	\qed
	
\end{example}

\begin{example}\label{exBlockTrend}
	Consider the model given by \cite{BradleyYeh}. We have a blocking experiment of $b$ blocks, each of size $l$, where the response of a trial is also influenced by a common trend effect determined by the position of the unit within the block. In each block, there is exactly one trial performed on each position. Moreover, the trend effect in position $t_2(i)$ does not depend on the particular block $t_1(i)$. We have
	\begin{equation}\label{eBlockTrend}
		Y_i = \tau_{u(i)} + \eta_{t_1(i)} + p^T(t_2(i))\varphi + \varepsilon_i,\quad i=1,\ldots,n,
	\end{equation}
	where $t_1(i) \in \{1,\ldots,b\}$ is the block in which trial $i$ is performed, $\eta_{t_1}$ is the effect of the $t_1$-th block, $t_2 \in \{1,\ldots, l\}$ denotes the position of the unit within the block, $n=bl$, $\varphi$ is a $(D+1) \times 1$ vector of nuisance trend effects and $p:\mathbb{R} \rightarrow \mathbb{R}^{D+1}$ is a regression function of the nuisance trend.
	
	Assume that $v=3$, $b=3$ and $l=8$, $n=3\times 8=24$ and that the time trend is modelled by discrete orthogonal polynomials $p_0, p_1, p_2$ of degrees $0,1,2$, i.e., $D=2$. We aim to find an $E$-optimal design for comparing treatments with one control. The optimal weight of the first treatment is $\gamma^*=1/2$ and the optimal weights of the other two are $1/4$.
	
	The conditions (ii) in $A$ can be expressed as two sets of conditions: (ii.a) $(v-1)b$ conditions $w_1^{-1} \sum_{t_2} \xi(1,t_1,t_2) = w_u^{-1} \sum_{t_2} \xi(u,t_1,t_2)$ for $u=2,\ldots, v$ and $t_1=1,\ldots, b$, and (ii.b) $(v-1)(D+1)$ conditions $w_1^{-1}\sum_{t_1,t_2}\xi(1,t_1,t_2)p(t_2) = w_u^{-1}\sum_{t_1,t_2}\xi(u,t_1,t_2)p(t_2)$, $u=2,\ldots, v$. By summing (ii.a) over all $t_1$, and using the fact that $\sum_t \xi(u,t)=1$, we obtain (i), which reduces the number of linearly independent rows in $A$ by $v$.
	Similarly to Proposition \ref{pSimplex}, using (i), it follows that there are at most $(v-1)(b+r) +n-1$ linearly independent rows in $A$, where $r$ is the affine dimension of the set $\{p(t_2)\}_{t_2}$. Since $p_0\varphi_0$ represents the constant term, the number of linearly independent rows in $A$ is at most $(v-1)(b+D) +n-1 = 33$. The minimum number of support points is $n=24$.
	
	Using the simplex method, we obtained an $E$-optimal balanced approximate design $\xi^*$, see Table \ref{tblBlockLPdesign}. The design $\xi^*$ has the support of size 30, which exceeds the minimum support size by 6, and it is fixed in 18 out of the 24 positions.

	\begin{table}[h]
		\begin{center}
			\begin{tabular}{ c| c | c c c c c c c c }
				\hline\noalign{\smallskip}
				block & $u\backslash t$ & 1 & 2 & 3 & 4 & 5 & 6 & 7 & 8 \\
				\noalign{\smallskip}\hline\noalign{\smallskip}
				1 & 1 &         0.0417    &     0  &  0.0417    &     0   & 0.0417  &       0  &  0.0417      &   0   \\    
				&  2 &          0  &  0.0417    &     0    &     0    &     0   & 0.0417     &    0     &    0      \\ 
				& 3 &           0    &     0     &    0  &  0.0417  &       0       &  0    &     0   & 0.0417 \\ \hline
				2 & 1 &         0.0417   & 0.0417    &     0   & 0.0324  &   0 &   0.0417   & 0.0093   &      0 \\
				& 2  &          0    &     0     &    0     &    0 &   0.0417   &      0     &    0  &  0.0417  \\
				& 3  &          0    &     0  &  0.0417 &   0.0093  &     0     &    0   & 0.0324    &     0  \\ \hline
				3 & 1  &        0.0046   &      0  &  0.0083   &      0 &   0.0417  &  0.0417  &  0.0370 &   0.0333  \\
				& 2  &          0   &   0.0417  &  0.0333    &     0    &     0  &       0   &      0  &  0.0083  \\
				& 3 &           0.0370  &       0      &   0 &   0.0417  &       0   &      0  &  0.0046     &    0 \\
				\noalign{\smallskip}\hline
			\end{tabular}
			\caption{$E$-optimal balanced approximate design obtained by a simplex method  for an experiment with 3 blocks, each of size 8, and a common trend effect.}
			\label{tblBlockLPdesign} 
		\end{center}
	\end{table}
	
	By a complete enumeration of treatments in the 8 non-fixed positions, we obtained an exact design $\xi:$ $b_1=12131213, b_2=11312132, b_3=32231111$, where the sequence $b_j$ determines the treatments and their positions in block $j$. Using Theorem \ref{tCwControls}, we get that $\xi$ has approximate efficiency 0.999.
	\qed
\end{example}

\section*{Appendix}


\paragraph{Proof of Proposition \ref{pNuisanceReducesInformation}\\}
	Let us partition the matrix $L$ in
	$$
	N_K(\xi)= \mathrm{min}_{L \in \mathbb{R}^{s \times m}: LK=I_s} LM(\xi)L^T
	$$
	as $L=\big(L_1, L_2\big)$, where $L_1$ is an $s \times v$ and $L_2$ is an $s \times d$ matrix. Then,
	$$\begin{aligned}
	N_K(\xi)
	&=\min_{LK=I_s} LM(\xi)L^T = \min_{(L_1, L_2) (Q^T, 0)^T = I_s} (L_1, L_2)M(\xi)(L_1, L_2)^T \\ 
	&\preceq \min_{L_1Q=I_s} L_1 M_{11}(\xi) L_1^T = N_Q(w).
	\end{aligned}$$
\qed

From now on, we assume that $K^T = \big(Q^T, 0_{s \times d} \big)$.

\begin{lemma}\label{lSameIM}	
	Let $\tilde{M}$ be a non-negative definite matrix. If a design $\xi$ satisfies $M(\xi)\tilde{M}^-K=K$ for some  generalized inverse $\tilde{M}^-$ of $\tilde{M}$,  then 
	(i) $\xi$ is feasible for $K^T\beta$ and
	(ii) $K^T M^-(\xi)K = K^T \tilde{M}^-K$.
\end{lemma}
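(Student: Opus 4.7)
}

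The plan is to handle the two parts in order, since part (i) will be needed to justify the algebraic manipulations in part (ii).

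For part (i), I observe that the hypothesis $M(\xi)\tilde{M}^-K = K$ is of the form $M(\xi) Y = K$ with $Y = \tilde{M}^- K$, which by definition means that every column of $K$ lies in $\mathcal{C}(M(\xi))$. Hence $\mathcal{C}(K)\subseteq \mathcal{C}(M(\xi))$, which is precisely the feasibility of $\xi$ for estimating $K^T\beta$. As a bonus, this also guarantees that $K^T M^-(\xi) K$ is invariant with respect to the choice of generalized inverse $M^-(\xi)$, so the expression in part (ii) is well-defined.

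For part (ii), I would substitute the hypothesized identity $K = M(\xi)\tilde{M}^-K$ into $K^TM^-(\xi)K$ on the right to obtain
\[
K^T M^-(\xi) K \;=\; K^T M^-(\xi) M(\xi) \tilde{M}^- K,
\]
and then reduce the leftmost factor $K^T M^-(\xi) M(\xi)$ back to $K^T$. This reduction is where the symmetry of the moment matrix enters: since $\mathcal{C}(K)\subseteq\mathcal{C}(M(\xi))$ from part (i), we can write $K = M(\xi) Y$ for some $Y$, whence $K^T = Y^T M(\xi)$ by symmetry of $M(\xi)$, and thus
\[
K^T M^-(\xi) M(\xi) \;=\; Y^T M(\xi) M^-(\xi) M(\xi) \;=\; Y^T M(\xi) \;=\; K^T
\]
by the defining property of the generalized inverse. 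Plugging back gives $K^T M^-(\xi) K = K^T \tilde{M}^- K$, which is the claim.

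The only delicate point is the step $K^T M^-(\xi) M(\xi) = K^T$: one must not confuse it with $M(\xi) M^-(\xi) K = K$ (which also holds, but is used on the wrong side). The argument above routes through the symmetry of $M(\xi)$ rather than any symmetry of the generalized inverse, so no additional assumption on $\tilde{M}^-$ or $M^-(\xi)$ is required. Once that subtlety is handled, the rest is a two-line computation.
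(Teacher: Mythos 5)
Your proof is correct and follows essentially the same route as the paper's: feasibility from $K=M(\xi)\tilde M^-K$ giving $\mathcal{C}(K)\subseteq\mathcal{C}(M(\xi))$, then the identity $K^TM^-(\xi)M(\xi)=K^T$ obtained by writing $K=M(\xi)Y$, using symmetry of $M(\xi)$ and the defining property of the generalized inverse. The paper's proof (modeled on Theorem 8.13 of Pukelsheim) uses exactly this argument with $X=\tilde M^-K$ in place of your $Y$.
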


\begin{proof}
	The steps of the proof follow the proof of Theorem 8.13 from \citet{puk}.
	We denote $G:=\tilde{M}^-$. Since $M(\xi)GK=K$, we obtain $M(\xi)X=K$, where $X=GK$. Therefore $\mathcal{C}(K) \subseteq \mathcal{C}(M(\xi))$ and hence $\xi$ is feasible.
	Let us premultiply the equation $M(\xi)GK=K$ by $K^TM^-(\xi)$ so that we obtain on the right-hand side $K^T M^-(\xi)K$. The left-hand side is then equal to $K^T M^-(\xi)M(\xi)GK$. Note that $K^T=X^TM^T(\xi)=X^TM(\xi)$ and hence the following holds
	$$K^T M^-(\xi)M(\xi)GK = X^TM(\xi) M^-(\xi)M(\xi)GK = X^T M(\xi) G K = K^T G K.$$
	It follows that $K^T M^-(\xi)K = K^T \tilde{M}^- K$.
\end{proof}

\begin{lemma}\label{lMGA}
	Let $w>0$ be a treatment proportions design and let $G:=\diag\big(w^{-1},0_d\big)$. Let $\xi$ be a design in model (\ref{eModel1}), then $\xi$ satisfies $M(\xi)GK=K$ if and only if (i) $w$ is a treatment proportions design of $\xi$ and (ii) $\xi$ is resistant to nuisance effects.
\end{lemma}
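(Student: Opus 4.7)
The plan is to prove the equivalence by a straightforward block computation. First, I would write $M(\xi)$ in its block form as given in the paper, and note that with $G = \diag(w^{-1}, 0_d)$ and $K = (Q^T, 0)^T$ we have $GK = \big(\diag(w^{-1})Q, 0\big)^T$ (nonzero only in the treatment block). Multiplying out $M(\xi)GK$ in block form yields a top block $M_{11}(\xi)\diag(w^{-1})Q$ and a bottom block $M_{12}^T(\xi)\diag(w^{-1})Q$. Hence $M(\xi)GK = K$ is equivalent to the conjunction of
\begin{equation*}
M_{11}(\xi)\diag(w^{-1})Q = Q \quad\text{and}\quad M_{12}^T(\xi)\diag(w^{-1})Q = 0.
\end{equation*}

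For the bottom block, I would observe that $M_{12}^T(\xi)\diag(w^{-1})$ is exactly the $d \times v$ matrix whose $u$-th column is $\frac{1}{w_u}\sum_{t\in\T}\xi(u,t)h(t)$. Therefore the condition $M_{12}^T(\xi)\diag(w^{-1})Q = 0$ is, verbatim, the nuisance resistance condition (\ref{eNuisanceResistant}), giving (ii).

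For the top block, denote by $w_u(\xi) := \sum_{t\in\T}\xi(u,t)$ the treatment totals of $\xi$, so $M_{11}(\xi) = \diag(w_1(\xi),\ldots,w_v(\xi))$ and $M_{11}(\xi)\diag(w^{-1}) = \diag\big(w_u(\xi)/w_u\big)$. The equation reduces to $DQ = 0$, where $D = \diag\big(w_u(\xi)/w_u - 1\big)$. Since $D$ is diagonal, the $u$-th row of $DQ$ equals $D_{uu}$ times the $u$-th row of $Q$; because by assumption no row of $Q$ is the zero row, $DQ = 0$ forces $D_{uu} = 0$ for every $u$, i.e.\ $w_u(\xi) = w_u$ for all $u$, which is (i). The converse implication is immediate. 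This row-nonvanishing argument is the only non-mechanical step in the proof, and it is essentially where the standing assumption on $Q$ is used.

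Combining the two equivalences gives $M(\xi)GK = K$ iff both (i) and (ii) hold, completing the proof.
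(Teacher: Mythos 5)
Your proof is correct and follows essentially the same route as the paper's: block-multiply $M(\xi)GK$, read off (ii) from the off-diagonal block as exactly condition (\ref{eNuisanceResistant}), and deduce (i) from the diagonal block using the standing assumption that no row of $Q$ vanishes. No gaps.
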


\begin{proof}
	We may express $M(\xi)GK = K$ as
	$ M_{11}(\xi)\mathrm{diag}\big(w^{-1}\big) Q = Q $ and $M_{12}^T(\xi) \mathrm{diag}\big(w^{-1}\big) Q = 0$. Since both $M_{11}(\xi)$ and $\mathrm{diag}\big(w^{-1}\big)$ are diagonal matrices, and all rows of $Q$ are assumed to be non-zero vectors, the first equation is equivalent to $\frac{1}{w_u}\sum_t\xi(u,t) = 1$ for all $u$, which is (i).
	From the second equation, we obtain that every row of 
	$
	M_{12}^T(\xi)\mathrm{diag}\big(w^{-1}\big)=
	\begin{bmatrix}
	\frac{1}{w_1}\sum_t \xi(1,t)h(t) & \ldots & \frac{1}{w_v}\sum_t \xi(v,t)h(t)
	\end{bmatrix}
	$ 
	needs to be in $\mathcal{N}(Q^T)$, which is (ii).
\end{proof}


\paragraph{Proof of Proposition \ref{pBalanceSameIM}\\}
	Let $\tilde{M} := \diag(w,0_d)$. Then, $G := \diag(w^{-1},0_d)$ is a generalized inverse of $\tilde{M}$. From Lemma \ref{lMGA} it follows that $M(\xi)GK=K$ and from Lemma \ref{lSameIM} it follows that (i) and (ii) hold. The statement (iii) is a direct consequence of (ii).
\qed



\paragraph{Proof of Theorem \ref{tOptWeights}\\}
	Let $\xi$ be a feasible design in (\ref{eModel1}). Using Proposition \ref{pNuisanceReducesInformation}, we obtain that $N_K(\xi) \preceq N_Q(w)$, where $w$ is the treatment proportions design of $\xi$. 
	Moreover, since product designs are nuisance resistant, part (iii) of Proposition \ref{pBalanceSameIM} implies that  $N_Q(w) = N_K(w \otimes \alpha)$ for any nuisance conditions design $\alpha$. Therefore, $N_K(\xi) \preceq N_K(w \otimes \alpha)$.
	
	Suppose that $w^*$ is not a $\Phi$-optimal design. Then, there exists a design $w_{\mathrm{b}}$ under model (\ref{eModelWithoutTrend}) such that $\Phi(N_Q(w^*)) < \Phi(N_Q(w_{\mathrm{b}}))$. Then, $\Phi(N_K(\xi^*)) \leq \Phi(N_Q(w^*)) < \Phi(N_Q(w_{\mathrm{b}})) = \Phi(N_K(w_{\mathrm{b}} \otimes \alpha))$ for any nuisance conditions design $\alpha$. That is a contradiction with $\xi^*$ being $\Phi$-optimal.
\qed

\begin{lemma}[Theorem 8.13 from \citet{puk}]\label{lAllOptimalDesigns}
	Let $\Phi$ be a strictly concave information function and let $\xi^*$ be $\Phi$-optimal for $K^T\beta$. Let $G$ be a generalized inverse of $M(\xi^*)$ that satisfies the normality inequality of the General Equivalence Theorem (Theorem 7.14 from \citet{puk}), i.e., there exists a non-negative definite matrix $D$ that solves the polarity equation
	$$ \Phi\big(N_K(\xi^*)\big)\Phi^\infty (D) = \mathrm{tr}(CD)=1, $$
	where $\Phi^\infty$ is the polar information function of $\Phi$ (see \citet{puk}),
	and $G$ satisfies the normality inequality
	$$\mathrm{tr}(M(\xi)B) \leq 1 \quad \text{for all feasible designs } \xi, $$
	where $B=GKN_K(\xi^*)DN_K(\xi^*)K^TG^T$.
	Then, a design $\xi$ is $\Phi$-optimal if and only if $M(\xi)GK=K$.
\end{lemma}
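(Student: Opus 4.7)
The plan is to prove the biconditional by handling the two directions separately. Sufficiency reduces to a direct application of Lemma \ref{lSameIM}, while necessity leans on strict concavity of $\Phi$ and the complementary-slackness content of the normality inequality.

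For sufficiency, suppose $M(\xi) G K = K$. Since $\xi^*$ is feasible, $\mathcal{C}(K) \subseteq \mathcal{C}(M(\xi^*))$, and because $G$ is a generalized inverse of $M(\xi^*)$, we also have $M(\xi^*) G K = K$ (as $M(\xi^*) G$ acts as the identity on $\mathcal{C}(M(\xi^*))$). Applying Lemma \ref{lSameIM} with $\tilde{M} = M(\xi^*)$ and $\tilde{M}^- = G$ first to $\xi$ and then to $\xi^*$, I obtain that $\xi$ is feasible for $K^T\beta$ and
$$K^T M^-(\xi) K \,=\, K^T G K \,=\, K^T M^-(\xi^*) K.$$
Hence $N_K(\xi) = N_K(\xi^*)$, so $\Phi(N_K(\xi)) = \Phi(N_K(\xi^*))$ and $\xi$ is $\Phi$-optimal.

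For necessity, assume $\xi$ is $\Phi$-optimal. First, strict concavity of $\Phi$ together with concavity of the information matrix map forces $N_K(\xi) = N_K(\xi^*)$: the midpoint design $\tfrac12 \xi + \tfrac12 \xi^*$ satisfies $N_K\bigl(\tfrac12 \xi + \tfrac12 \xi^*\bigr) \succeq \tfrac12 N_K(\xi) + \tfrac12 N_K(\xi^*)$, and strict concavity of $\Phi$ applied to this inequality would produce a strictly better design unless $N_K(\xi) = N_K(\xi^*)$. Next, the General Equivalence Theorem together with the polarity equation $\mathrm{tr}(N_K(\xi^*) D) = 1$ and the identity $M(\xi^*) G K = K$ give $\mathrm{tr}(M(\xi^*) B) = 1$. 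Since $\xi$ is optimal, averaging with $\xi^*$ and using the normality inequality forces $\mathrm{tr}(M(\xi) B) = 1$ as well. Rewriting with the cyclic trace property,
$$1 \;=\; \mathrm{tr}\bigl(M(\xi)\, G K N_K(\xi^*) D N_K(\xi^*) K^T G^T\bigr) \;=\; \mathrm{tr}\bigl(D\, N_K(\xi^*) K^T G^T M(\xi) G K N_K(\xi^*)\bigr),$$
which together with the polarity equation pins down $K^T G^T M(\xi) G K = K^T G K$ in the ``direction'' of $D$.

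The main obstacle is the last step: upgrading this quadratic trace identity to the pointwise matrix equation $M(\xi) G K = K$. A single choice of $D$ only controls $M(\xi) G K$ along the range of $D$, so one must exploit either that under strict concavity of $\Phi$ the polar subgradient $D$ has full range on the image of $N_K(\xi^*)$, or that the family of admissible $D$ is rich enough to probe every direction in $\mathbb{R}^s$. Combining the resulting equality $K^T G^T M(\xi) G K = K^T G K$ with feasibility $\mathcal{C}(K) \subseteq \mathcal{C}(M(\xi))$ and the fact that $M(\xi) G K$ already lies in $\mathcal{C}(M(\xi^*))$ should then close the argument, yielding $M(\xi) G K = K$ as claimed.
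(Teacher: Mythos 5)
First, note that the paper itself does not prove this lemma: it is imported wholesale as Theorem 8.13 of \citet{puk}, so there is no in-paper proof to compare against. Your sufficiency direction ($M(\xi)GK=K\Rightarrow\xi$ optimal) is complete and correct, and is exactly the intended mechanism: it is Lemma \ref{lSameIM} applied with $\tilde M=M(\xi^*)$ and $\tilde M^-=G$, once to $\xi$ and once to $\xi^*$, giving $N_K(\xi)=(K^TGK)^{-1}=N_K(\xi^*)$. (Note this half uses neither strict concavity nor the normality inequality.)

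The necessity direction contains a genuine gap, which you flag but do not close. You correctly obtain $N_K(\xi)=N_K(\xi^*)=:N$ from strict concavity, and $\mathrm{tr}(M(\xi)B)=1$ from the collapsing chain of inequalities. Writing $L_0:=NK^TG^T$, which satisfies $L_0K=I_s$ because $K^TG^TK=N^{-1}$, this yields $\mathrm{tr}\bigl((L_0M(\xi)L_0^T-N)D\bigr)=0$ with $L_0M(\xi)L_0^T-N\succeq 0$ and $D\succeq 0$, hence $(L_0M(\xi)L_0^T-N)D=0$. To upgrade this to $L_0M(\xi)L_0^T=N$ --- which is what forces $M(\xi)L_0^T=KN$ and thus $M(\xi)GK=K$ --- you would need $D$ to be nonsingular, and neither of your suggested escape routes is substantiated: for a general strictly concave information function the matrix $D$ solving the polarity equation is guaranteed only to be nonnegative definite, and the lemma's hypothesis hands you a single $D$, not a family rich enough to probe every direction. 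The usual way to close this step avoids $D$ entirely: strict concavity gives uniqueness of the optimal information matrix, so the midpoint design $\tfrac12(\xi+\xi^*)$ is also optimal with information matrix $N$, which forces equality in $N_K\bigl(\tfrac12 M(\xi)+\tfrac12 M(\xi^*)\bigr)\succeq \tfrac12 N_K(\xi)+\tfrac12 N_K(\xi^*)$; equality there produces a common minimizing left inverse of $K$ for $M(\xi)$ and $M(\xi^*)$, from which $M(\xi)GK=K$ follows with no nondegeneracy assumption on $D$. As written, your argument establishes only the ``if'' half of the lemma.
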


In order to use Lemma \ref{lAllOptimalDesigns}, we need to obtain a matrix $G$ that satisfies the normality inequality of the General Equivalence Theorem.

\begin{lemma}\label{lGinvET}
	Let $\Phi$ be a strictly concave information function, let $w^*$ be a $\Phi$-optimal treatment proportions design and let $G:=\diag((w^*)^{-1},0_d)$. Then, $G$ satisfies the normality inequality of the General Equivalence Theorem for estimating $K^T\beta$ in model (\ref{eModel1}). 
\end{lemma}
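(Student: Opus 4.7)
The overall strategy is to lift the normality inequality that holds for $w^*$ in the marginal model (\ref{eModelWithoutTrend}) up to the full model (\ref{eModel1}), exploiting the block-diagonal structure of $G$ and the fact that $K$ has a zero lower block. First, I would invoke the General Equivalence Theorem for $w^*$ in (\ref{eModelWithoutTrend}): since $\tilde{G}:=\diag((w^*)^{-1})$ is a generalized inverse of $M(w^*)=\diag(w^*)$ and $w^*$ is $\Phi$-optimal for $Q^T\tau$, there exists a nonnegative definite $s\times s$ matrix $D$ satisfying the polarity equation $\Phi(N_Q(w^*))\Phi^\infty(D)=\mathrm{tr}(N_Q(w^*)D)=1$ together with the marginal normality inequality $\mathrm{tr}(M(w)\tilde{B})\leq 1$ for every treatment proportions design $w$, where $\tilde{B}:=\tilde{G}QN_Q(w^*)DN_Q(w^*)Q^T\tilde{G}^T$.

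Next, I would fix any nuisance conditions design $\alpha$ and set $\xi^*:=w^*\otimes\alpha$. By Proposition \ref{pBalanceSameIM}, $\xi^*$ is feasible with $N_K(\xi^*)=N_Q(w^*)$, so the full-model polarity equation with the same $D$ is inherited from the marginal one. Using $K=(Q^T,0)^T$ together with the block form of $G$, a short computation gives
$$GK=\begin{pmatrix}\tilde{G}Q\\ 0\end{pmatrix},\qquad B:=GKN_K(\xi^*)DN_K(\xi^*)K^TG^T=\begin{pmatrix}\tilde{B} & 0\\ 0 & 0\end{pmatrix}.$$
I would also verify that $M(\xi^*)GK=K$: using the product form of $\xi^*$, the upper block equals $\diag(w^*)\tilde{G}Q=Q$, while the lower block reduces to a multiple of $1_v^TQ=0_s^T$. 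Hence $G$ legitimately plays the role of the generalized-inverse-like matrix at $\xi^*$ that the General Equivalence Theorem requires.

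For an arbitrary feasible design $\xi$ in (\ref{eModel1}) with treatment proportions design $w$, the block form of $B$ and $M_{11}(\xi)=\diag(w)$ then give
$$\mathrm{tr}(M(\xi)B)=\mathrm{tr}(M_{11}(\xi)\tilde{B})=\mathrm{tr}(M(w)\tilde{B})\leq 1,$$
the last inequality being the marginal normality inequality from the first step. This is precisely the normality inequality in the full model, which completes the plan.

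The main obstacle I expect is the interaction between the rank-deficient lower block $0_d$ of $G$ and the nuisance blocks $M_{12}(\xi)$, $M_{22}(\xi)$ of $M(\xi)$: one must check that the zero lower block of $G$ annihilates their contributions both to $M(\xi^*)GK$ and to $\mathrm{tr}(M(\xi)B)$. The calculation of the block pattern of $B$ and the use of the contrast condition $1_v^TQ=0_s^T$ are exactly what make the nuisance contributions vanish, letting the marginal inequality propagate to the full model without strengthening the hypotheses on $\xi$.
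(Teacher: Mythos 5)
Your proposal is correct and follows essentially the same route as the paper: invoke the marginal normality inequality for $w^*$ with $G_{11}=\diag((w^*)^{-1})$, transfer the polarity equation to the full model via the optimal product design $w^*\otimes\alpha$ (the paper phrases this through the uniqueness of the optimal information matrix under strict concavity), and use the block structure of $G$, $K$ and $B$ to reduce $\mathrm{tr}(M(\xi)B)$ to $\mathrm{tr}(M(w)B_w)\le 1$. Your extra verification that $M(\xi^*)GK=K$ is harmless and is handled in the paper by Lemma \ref{lMGA} elsewhere.
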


\begin{proof}
	Let us denote $N^*:=N_Q(w^*)$ and $G_{11}:=\diag\big((w^*)^{-1}\big)$.
	Since $w^*$ is optimal in (\ref{eModelWithoutTrend}), the matrix $G_{11}$ that is the unique generalized inverse of $M(w^*)$, satisfies normality inequality of the General Equivalence Theorem for model (\ref{eModelWithoutTrend}), i.e. there exists a matrix $D$ which satisfies the polarity equation $\Phi(N^*)\Phi^\infty(D)=\mathrm{tr}(N^*D) = 1$ and the matrix  $B_w=G_{11}QN^*DN^*Q^TG_{11}$ satisfies the normality inequality $\mathrm{tr}(M(\tilde{w})B_w) \leq 1$ for all $\tilde{w}$.
	
	There exists a unique $\Phi$-optimal information matrix $N_K(\xi^*)$, because $\Phi$ is strictly concave. Since $N_K(w^* \otimes \alpha) = N_Q(w^*)=N^*$ is $\Phi$-optimal, we have $N_K(\xi^*) = N^*$. Thus, the polarity equality holds in model (\ref{eModel1}) for the same matrix $D$. Let $\tilde{\xi}$ be a feasible design. Then, the left-hand side of the normality inequality in model (\ref{eModel1}) is $\mathrm{tr}(M(\tilde{\xi})B)$, where
	$$
	B=\begin{bmatrix}
	G_{11} & 0 \\ 0 & 0
	\end{bmatrix}
	\begin{bmatrix}
	Q \\ 0
	\end{bmatrix}
	N^* D N^*
	\begin{bmatrix}
	Q^T & 0
	\end{bmatrix}
	\begin{bmatrix}
	G_{11} & 0 \\ 0 & 0
	\end{bmatrix}
	=
	\begin{bmatrix}
	B_w & 0 \\ 0 & 0
	\end{bmatrix}.
	$$
	Then, because $B_w$ satisfies the normality inequality in model (\ref{eModelWithoutTrend}), we obtain $\mathrm{tr}(M(\tilde{\xi})B) = \mathrm{tr}(M_{11}(\tilde{\xi})B_w) = \mathrm{tr}(M(\tilde{w})B_w) \leq 1$, where $\tilde{w}$ is the treatment proportions design of $\tilde{\xi}$.
\end{proof}


\paragraph{Proof of Theorem \ref{tBalanceOpt}\\}
	Let $\xi$ be a design in model (\ref{eModel1}) and $w$ be its treatment proportions design. Since $\Phi$ is isotonic, from Proposition \ref{pNuisanceReducesInformation} it follows that $\Phi(N_Q(w)) \geq \Phi(N_K(\xi))$.
	Since $w^*$ is $\Phi$-optimal, $\Phi(N_Q(w^*)) \geq \Phi(N_Q(w))$ and it is feasible, thus $w^* >0$. Using Proposition \ref{pBalanceSameIM}, we get that $\Phi(N_K(\xi^*)) = \Phi(N_Q(w^*)) \geq \Phi(N_Q(w)) \geq \Phi(N_K(\xi)),$ i.e., $\xi^*$ is $\Phi-$optimal.	
\qed



\paragraph{Proof of Theorem \ref{tSuffAndNecc}\\}
	Let $G=\diag((w^*)^{-1},0_d)$. From Lemma \ref{lGinvET} it follows that $G$ satisfies the normality inequality of the General Equivalence Theorem. Lemma \ref{lAllOptimalDesigns} yields that a design $\xi$ is $\Phi$-optimal if and only if $M(\xi)GK=K$. The equality $M(\xi)GK=K$ holds if and only if $\xi$ satisfies (i) and (ii) from Lemma \ref{lMGA}.
\qed


\paragraph{Proof of Theorem \ref{tCScontrasts}\\}
	
	First, assume that $Q$ has full column rank.
	Let $w$ be a feasible treatment proportions design and let $P$ be a $v \times v$ permutation matrix. We define $Pw$ to be the design given by the $P$-permutation of treatments in $w$, i.e., $Pw(u)=w(\pi_P(u))$  for $u \in \{1,\ldots,v\}$, where $\pi_P$ is the permutation of elements $\{1,\ldots,v\}$ corresponding to the matrix $P$. Since $Pw>0$, it is feasible, its moment matrix is $M(Pw) = PM(w)P^T$ and it has information matrix $N_Q(Pw) = (Q^TPM^{-1}(w)P^TQ)^{-1}$.
	
	We will use the well-known fact that if $X$ is any matrix, the non-zero eigenvalues of the matrices $X^TX$ and $XX^T$ are the same (e.g., 6.54(c) in \citet{Seber}), including multiplicities. Define $Y=Q^TM^{-1/2}(w)$ and $Z=Q^T P M^{-1/2}(w)$. Since $QQ^T$ is completely symmetric, $Y^TY = Z^TZ$.
	Furthermore, $YY^T=Q^T M^{-1}(w) Q = N_Q^{-1}(w)$ and $ZZ^T=Q^T P M^{-1}(w) P^T Q = N_Q^{-1}(Pw)$, thus $N_Q(w)$ and $N_Q(Pw)$ have the same set of non-zero eigenvalues. Since they have the same (full) rank, it follows that $N_Q(w)$ and $N_Q(Pw)$ are orthogonally similar and $\Phi(Pw) = \Phi(w)$.
	Note that analogous results hold in the rank-deficient case for the matrices $C_Q(w)$ and $C_Q(Pw)$.
	
	The uniform treatment design satisfies
	$$\begin{aligned}
	\Phi\big(\bar{w}\big)
	&=
	\Phi\left(\frac{1}{v!}\sum_{P-\text{perm.}} Pw\right) \geq
	\frac{1}{v!}\sum_{P-\text{perm.}} \Phi(Pw)=  \\
	&= \frac{1}{v!}\sum_{P-\text{perm.}} \Phi(w) = \frac{1}{v!}v! \Phi(w) = \Phi(w),
	\end{aligned}$$
	where the inequality follows from the concavity of $\Phi$.
	Thus, $\bar{w}$ is $\Phi$-optimal.
\qed


\paragraph{Proof of Theorem \ref{tCwControls}\\}
	Note that for $Q=(-I_g \otimes 1_{v-g}, 1_{g} \otimes I_{v-g})^T$ we have
	$$
	QQ^T = \begin{bmatrix}
	(v-g)I_g & -J_{g \times (v-g)} \\ -J_{(v-g) \times g} & gI_{v-g}
	\end{bmatrix}.
	$$
	Let $w$ be a treatment proportions design, let $P_1$, $P_2$ be $g \times g$ and $(v-g) \times (v-g)$ permutation matrices, respectively, and let
	\begin{equation}
	\label{ePerm}
	\tilde{P}=\begin{bmatrix}
	P_1 & 0_{g \times (v-g)} \\ 0_{(v-g) \times g} & P_2
	\end{bmatrix}.
	\end{equation}
	Define $\tilde{P}w$ to be the design given by the $\tilde{P}$-permutations of the treatments. 
	Then $M(\tilde{P}w)= \tilde{P}M(w)\tilde{P}^T$ and $\tilde{P}^T QQ^T \tilde{P} = QQ^T$. From an argument analogous to the proof of Theorem \ref{tCScontrasts}, $C_Q(\tilde{P}w)$ and $C_Q(w)$ are orthogonally similar and $\Phi_p(\tilde{P}w) = \Phi_p(w)$.
	
	Define
	$\tilde{w} = \frac{1}{(v-g)!g!} \sum_{\tilde{P}} \tilde{P} w$, where the sum is over all $v \times v$ permutation matrices $\tilde{P}$ of the form (\ref{ePerm}). Then $\Phi_p(\tilde{w}) \geq \Phi_p(w)$. It follows that an optimal design exists in the class of designs that allocate one weight to each of the first $g$ treatments, say $\gamma_1$ ($0<\gamma_1<1/g$), and another weight to each of the other treatments, $\gamma_2:=(1-g\gamma_1)/(v-g)$. Let  $\gamma:=g\gamma_1$ be the total weight of the first $g$ treatments and for a given $\gamma$, we denote such designs as $w_\gamma$.
	
	The non-zero eigenvalues of $C_Q(w_\gamma)$ are inverse to the non-zero eigenvalues of $V(w_\gamma):=Q^TM^{-1}(w_\gamma)Q$, where $M(w_\gamma)=\diag(\gamma_1 1_g, \gamma_2 1_{v-g})$. Let $X=Q^TM^{-1/2}(w_\gamma)$. Then the set of non-zero eigenvalues of $V(w_\gamma) = X^TX$ coincides with the set of non-zero eigenvalues of $$
	XX^T = M^{-1/2}(w_\gamma)QQ^T M^{-1/2}(w_\gamma) =
	\begin{bmatrix}
	(v-g)\gamma_1^{-1}I_g & - (\gamma_1\gamma_2)^{-1/2}J_{g \times (v-g)} \\
	- (\gamma_1\gamma_2)^{-1/2}J_{(v-g) \times g} & g\gamma_2^{-1}I_{v-g}
	\end{bmatrix}.
	$$
	It can be seen that $XX^T$ has the following eigenvalues, listed with the corresponding eigenvectors $x=(x_1^T,x_2^T)^T$, where $x_1 \in \mathbb{R}^{g}$ and $x_2 \in \mathbb{R}^{v-g}$: 
	$\mu_1 = g\gamma_2^{-1}$ with multiplicity (w.m.) $v-g-1$, $x_1=0_g$ and $1_{v-g}^Tx_2=0$;
	$\mu_2 = (v-g)\gamma_1^{-1}$ w.m. $g-1$, $1_g^Tx_1 = 0$ and $x_2=0_{v-g}$;
	$\mu_3 = (v-g)\gamma_1^{-1} + g\gamma_2^{-1}$ w.m. 1, $x_1=-(v-g)\gamma_2^{1/2}1_g$ and $x_2=g\gamma_1^{1/2}1_{v-g}$;
	and $\mu_4 = 0$ w.m. 1, $x_1=\gamma_1^{1/2}1_g$ and $x_2=\gamma_2^{1/2}1_{v-g}$.
	
	Therefore, the non-zero eigenvalues of $C_Q(w_\gamma)$ are $\lambda_1 = \frac{1-g\gamma_1}{g(v-g)}$ w.m. $v-g-1$, $\lambda_2 = \frac{\gamma_1}{v-g}$ w.m. $g-1$, $\lambda_3=\frac{\gamma_1(1-g\gamma_1)}{v-g}$ w.m. 1. Thus for $p \in (-\infty,0)$, the $\Phi_p$-optimal $w_\gamma$ is obtained by minimizing the convex function
	$$f_p(\gamma_1) = (v-g-1)\left(\frac{1-g\gamma_1}{g(v-g)}\right)^p + (g-1)\left(\frac{\gamma_1}{v-g}\right)^p + \left(\frac{\gamma_1(1-g\gamma_1)}{v-g}\right)^p.$$
	Then $f_p'(\gamma_1) = 0$ if and only if
	$$-(v-g-1)(1-g\gamma_1)^{p-1} + (g-1)(g\gamma_1)^{p-1} + (1-2g\gamma_1)(g\gamma_1)^{p-1}(1-g\gamma_1)^{p-1}=0 $$
	which is equivalent to
	$$-(v-g-1)(g\gamma_1)^{1-p} + (g-1)(1-g\gamma_1)^{1-p} + 1-2g\gamma_1 = 0.$$
	Using $\gamma=g\gamma_1$, we obtain (\ref{eCwControls}).
	
	If we set $p=0$ in (\ref{eCwControls}), we obtain $\gamma=g/v$, which means that $w_\gamma$ is a uniform design. Such design is indeed $D$-optimal, because it is well known that the uniform design is $D$-optimal for any system of contrasts.
	
	The smallest non-zero eigenvalue of $C_Q(w_\gamma)$ is $\lambda_3$ 
	and hence the $\Phi_{-\infty}$-optimal design can be obtained by maximizing 
	$$f_{-\infty}(\gamma) = \frac{\gamma(1-\gamma)}{g(v-g)}$$
	which has maximum in $\gamma=\frac{1}{2}$.
	
	Note that even in the case $g=1$, where $Q$ is not rank deficient, the eigenvalues of $V(w)$ are inverses of the eigenvalues of $N_Q(w)$ and thus our results hold.
\qed


\paragraph{Proof of Theorem \ref{tMVopt}\\}
	This proof will closely follow the proof of Theorem \ref{tCwControls}.
	The covariance matrix of the least-square estimators is proportional to $V(w) = Q^TM^{-1}(w)Q$. Note that since the $MV$-optimality criterion $\Phi_{MV}$ depends only on the diagonal of the variance matrix, it is permutationally invariant.
	
	Let $w$ be a treatment proportions design and let $\tilde{P}$, $\tilde{w}$, $\gamma_1$, $\gamma_2$, $\gamma$ and $w_\gamma$ be defined as in the proof of Theorem \ref{tCwControls}. Then $Q^T\tilde{P} = BQ^T$, where $B=P_1 \otimes P_2$, which is a permutation matrix. Thus $V(\tilde{P}w) = BV(w)B^T$, $\Phi_{MV}(\tilde{P}w) = \Phi_{MV}(w)$ and $\Phi_{MV}(\tilde{w}) \leq \Phi_{MV}(w)$. Similarly, $\Phi_{MV}(w') \leq \Phi_{MV}(w)$. It follows that an optimal design exists in the class of designs $w_\gamma$. 
	
	We have $V(w_\gamma)= \gamma_1^{-1}I_g \otimes J_{v-g} + \gamma_2^{-1} J_g \otimes I_{v-g}$ and all its diagonal elements are $\gamma_1^{-1} + \gamma_2^{-1}$. Thus the optimal $\gamma_1$ may be obtained by minimizing
	$$f_{MV}(\gamma_1) = \gamma_1^{-1} + \frac{v-g}{1-g\gamma_1},$$
	which has minimum in $\gamma_1^* = \frac{\sqrt{g(v-g)}-g}{g(v-2g)}$, thus $\gamma^* = \frac{\sqrt{g(v-g)}-g}{v-2g}$.
\qed

\begin{lemma}\label{lTrigBalance}
	Let $l,m \in \mathbb{N}$, let $\xi_p$ be an exact design of size $l$ and let $\xi=\xi_p\xi_p...\xi_p$ be the exact design of size $n=lm$ formed by an $m$-fold replication of $\xi_p$. Assume that $a \in \mathbb{N}$ is not an integer multiple of $m$. Then, $\xi$ is balanced for the nuisance regressors of the form $\cos(a\phi_nt)$ and $\sin(a\phi_nt)$, $t=1,...,n$. 
\end{lemma}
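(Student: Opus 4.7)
The plan is to apply Proposition \ref{pBalanceCond}. Since $\xi$ is an exact design of size $n=lm$ placing exactly one trial at each time point, its nuisance conditions design is uniform, $\alpha = 1_n/n$, so the proposition reduces the balance condition with respect to a scalar regressor $h$ to the identity
$$
\frac{1}{w_u}\sum_{t=1}^n \xi(u,t)\, h(t) = \frac{1}{n}\sum_{t=1}^n h(t)\qquad\text{for every treatment }u.
$$
I would show that, under the hypothesis $m\nmid a$, both sides vanish when $h(t)=\cos(a\phi_n t)$ and when $h(t)=\sin(a\phi_n t)$.

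The cleanest route is to work with the complex exponential $h(t)=e^{\i a \phi_n t}$ and take real and imaginary parts at the end. For the right-hand side, note that $n=lm$, so $m\nmid a$ forces $n\nmid a$, and hence the geometric sum $\sum_{t=1}^n e^{\i a \phi_n t}$ vanishes. For the left-hand side I would exploit the replication structure: writing $t=kl+s$ with $k\in\{0,\ldots,m-1\}$ and $s\in\{1,\ldots,l\}$, the exact-design normalization gives $\xi(u,kl+s)=\xi_p(u,s)/m$, so
\begin{equation*}
\sum_{t=1}^n \xi(u,t)\, e^{\i a \phi_n t}
=\frac{1}{m}\left(\sum_{s=1}^l \xi_p(u,s)\, e^{\i a \phi_n s}\right)\left(\sum_{k=0}^{m-1} e^{\i a \phi_n l k}\right).
\end{equation*}
Because $\phi_n l=2\pi/m$, the second factor equals $\sum_{k=0}^{m-1} e^{\i 2\pi a k/m}$, which is zero precisely because $m\nmid a$. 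Taking real and imaginary parts of the resulting identity $\sum_t \xi(u,t)\, e^{\i a \phi_n t}=0$ yields the claim for both trigonometric regressors and for every $u$.

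No step presents a genuine difficulty --- the core ingredient is the orthogonality of characters on the cyclic group $\mathbb{Z}/n\mathbb{Z}$ restricted to the subgroup of order $m$ generated by $l$. The only bookkeeping one must keep straight is the bijection between the global time index $t\in\{1,\ldots,n\}$ and the within-block pair $(k,s)$, together with the factor $1/m$ that arises because the replicated design distributes the unit total mass uniformly over the $m$ copies of each original time slot in $\xi_p$.
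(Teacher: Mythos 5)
Your proof is correct and follows essentially the same route as the paper's: the same decomposition of the time index into block and within-block parts, the same passage to the complex exponential, and the same vanishing geometric sum of $m$-th roots of unity under the hypothesis $m\nmid a$. The only differences are cosmetic — you route the conclusion through Proposition \ref{pBalanceCond} (the paper just notes that all the weighted sums vanish, so condition (\ref{eBalanceCondGeneral2}) holds trivially) and you carry the $1/m$ normalization explicitly, which the paper suppresses; neither affects the argument.
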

\begin{proof}
	Let $u \in \{1,...,v\}$. Using the fact that $\xi(u,k+lj)=\xi_p(u,k)$ for all $k \in \{1,...,l\}$ and $j \in \{0,...,m-1\}$, we obtain
	\begin{eqnarray*}
		\sum_{t=1}^n \xi(u,t)\cos(a\phi_nt)+\mathrm{i} \sum_{t=1}^n \xi(u,t)\sin(a\phi_nt) = \sum_{t=1}^n \xi(u,t)e^{a\phi_nt\mathrm{i}} \\ = \sum_{j=0}^{m-1}\sum_{k=1}^l\xi(u,k+lj)e^{a\phi_n(k+lj)\mathrm{i}}
		=\left(\sum_{k=1}^l\xi_p(u,k)e^{a\phi_nk\mathrm{i}}\right)\left(\sum_{j=0}^{m-1}e^{(a\phi_nl \mathrm{i})j}\right).
	\end{eqnarray*} 
	Note that if $a$ is not an integer multiple of $m$ then $a\phi_nl=2\pi (a/m)$ is not an integer multiple of $2\pi$, which implies $e^{a\phi_nl\mathrm{i}} \neq 1$. In that case
	\begin{equation*}
	\sum_{j=0}^{m-1}e^{(a\phi_nl\mathrm{i})j}=\frac{1-e^{a2\pi \mathrm{i}}}{1-e^{a\phi_n l\mathrm{i}}}=0.
	\end{equation*}
\end{proof}


\paragraph{Proof of Proposition \ref{pTrig}\\}
	The proposition follows from Theorem \ref{tBalanceOpt} and Lemma \ref{lTrigBalance}.
\qed


\paragraph{Proof of Proposition \ref{pSimplex}\\}
	
	It is well known that a point $x$ is a vertex of the set $\{x|Ax=b,x\geq0\}$ if and only if the system $\{A_j| x_j > 0\}$, where $A_j$ is the $j$-th column of $A$, has full rank.
	
	The matrix $A$ consists of $v+(v-1)d+n$ rows, but they are linearly dependent.
	Let $k$ be the affine dimension of $\{h(t)\}_{t \in \T}$ and, without the loss of generality, let $\T=\{1,\ldots,n\}$. Then, the matrix $[h(2)-h(1), \ldots, h(n)-h(1)]$ has rank $k$ and thus its row space has dimension $k$. That is, without the loss of generality, we obtain that
	$h_i(t)-h_i(1) = \sum_{j=1}^k c_j^{(i)}\big(h_j(t)-h_j(1)\big)$ for some $ c_1^{(i)}, \ldots,  c_k^{(i)} \in \mathbb{R}$, for $i>k$ and $t\in \{1,\ldots,n\}$ (for $t=1$, we formally get $0=0$). Let $u \in \{1,\ldots,v\}$. Then, if (ii) is satisfied in the first $k$ coordinates of $h$, i.e., for $h_1, \ldots, h_k$, we have for all $i>k$ and $u \in \{1,...,v\}$
	$$\begin{aligned}
	w_1^{-1} \sum_t \xi(1,t) h_i(t) 
	&=  w_1^{-1} \Big(h_i(1) -  \sum_{j=1}^k c_j^{(i)}h_j(1)\Big)\sum_t \xi(1,t) + \sum_{j=1}^k c_j^{(i)}w_1^{-1} \sum_t \xi(1,t)h_j(t) \\
	&= h_i(1) -  \sum_{j=1}^k c_j^{(i)}h_j(1) + \sum_{j=1}^k c_j^{(i)}w_u^{-1} \sum_t \xi(u,t)h_j(t) \\
	&=  w_u^{-1} \sum_t \xi(u,t)\Big(h_i(1) -  \sum_{j=1}^k c_j^{(i)}h_j(1)\Big) + \sum_{j=1}^k c_j^{(i)}w_u^{-1} \sum_t \xi(u,t)h_j(t) \\
	&= w_u^{-1}\sum_t \xi(u,t) \Big[ \Big(h_i(1) -  \sum_{j=1}^k c_j^{(i)}h_j(1)\Big) +  \sum_{j=1}^k c_j^{(i)}h_j(t)\Big] \\
	&= w_u^{-1} \sum_t \xi(u,t) h_i(t),
	\end{aligned}$$
	where the second and the third equality hold because of (i).
	It follows that (ii) provides at most $k(v-1)$ additional linearly independent equalities.
	
	If $\xi$ satisfies (i), it holds that $\sum_{u,t} \xi(u,t)=1$. Thus, if $\xi$ satisfies (iii) for $t=1, \ldots, n-1$, we have $1= \sum_{t=1}^{n-1} \sum_u \xi(u,t) + \sum_u \xi(u,n) = \frac{n-1}{n} +  \sum_u \xi(u,n)$ and therefore (iii) holds also for $t=n$. That is, (iii) provides only $n-1$ additional linearly independent equalities. Hence, the rank of $A$ is at most $v + (v-1)k + n-1$ and a vertex $x$ contains at most $v + (v-1)k + n-1$ support points.
	
\qed

\bibliographystyle{plainnat}
\bibliography{nuis_res}

\end{document}